\newtheorem{theorem}{Theorem}[section]
\newtheorem{corollary}[theorem]{Corollary}
\newtheorem{definition}[theorem]{Definition}
\newtheorem{lemma}[theorem]{Lemma}
\newtheorem{proposition}[theorem]{Proposition}
\newtheorem{remark}[theorem]{Remark}
\newtheorem{example}[theorem]{Example}
\def\mn{{\mathbb{N}}}
\newenvironment{proof}{\begin{trivlist}\item[]{\it
Proof.}}{\hfill$\square$\end{trivlist}}
\begin{document}
%
\title{On syzygies for rings of invariants of abelian groups}
%
%
\author{
M. Domokos\thanks{This research was partially supported by National Research, Development and Innovation Office,  NKFIH K 119934.}  
\\
{\small MTA R\'enyi Institute, Budapest, Hungary}
\\  {\small E-mail: domokos.matyas@renyi.mta.hu } 
}
\date{}


\maketitle              

\begin{abstract} 
It is well known that results on zero-sum sequences over a finitely generated abelian group 
can be translated to statements on generators of rings of invariants of the dual group. Here  the direction of the transfer of information between zero-sum theory and invariant theory is reversed. First it is shown how a presentation by generators and relations of the ring of invariants of an abelian group acting linearly on a finite dimensional vector space can be obtained from a presentation of the ring of invariants for the corresponding multiplicity free representation. This  combined 
with a known degree bound for syzygies of rings of invariants,  yields bounds on the 
presentation of a block monoid associated to a finite sequence of elements in an abelian group. 
The results have an equivalent formulation in terms of binomial ideals, but 
here  the language of monoid congruences and the notion of catenary degree is used. 

\medskip
\noindent MSC: 13A50 (Primary) 20M13, 20M14 (Secondary)
\end{abstract}

\section{Introduction}\label{sec:intro}

Let $G$ be an  abelian group (written multiplicatively) and $G_0\subseteq G$ a finite subset. 
Consider the additive monoid $\mathbb{N}_0^{G_0}$ (maps from $G_0$ into the set of non-negative integers, with pointwise addition). It contains the submonoid 
\begin{equation}\label{eq:def_block_monoid}
\mathcal{B}(G_0):=\{\alpha\in \mathbb{N}_0^{G_0}\mid \prod_{g \in G_0}
g^{\alpha(g)}=1\in G\}
\end{equation} 
called the {\it block monoid of $G_0$} or the {\it monoid of product-one sequences over $G_0$}, see \cite[Definition 3.4.1]{geroldinger_halter-koch}. It causes no loss of generality in the construction of $\mathcal{B}(G_0)$ if we assume that the group $G$ is generated by $G_0$. 
We note that in most of the related literature the group is written additively, and 
therefore the terminology of \emph{zero-sum sequences} is used. 
$\mathcal{B}(G_0)$ is a reduced affine monoid 
(see for example \cite[Theorem 3.4.2.1]{geroldinger_halter-koch}), write $\mathcal{A}(\mathcal{B}(G_0))$ for the finite set of its atoms. 

Our focus is on a similar but more general construction. 
Fix an $m$-tuple $\underline{g}=(g_1,\dots,g_m)\in G^m$ of elements of the abelian group $G$, and set 
\begin{equation}\label{eq:B(g)} 
\mathcal{B}(\underline{g})=\{\alpha\in\mathbb{N}_0^m\mid 
\prod_{i=1}^mg_i^{\alpha_i}=1\in G\}.\end{equation}  
This is a finitely generated submonoid of the additive monoid $\mathbb{N}_0^m$. 
Write $\mathrm{supp}(\underline{g})$ for the subset $\{g_1,\dots,g_m\}$ of $G$. 
In the special case when $g_1,\dots,g_m$ are distinct, the monoid $\mathcal{B}(\underline{g})$ can be identified with $\mathcal{B}(\mathrm{supp}(\underline{g}))$. 
In general (i.e. when $g_1,\dots,g_m$ are not all distinct) the monoid 
$\mathcal{B}(\underline{g})$ is different from $\mathcal{B}(\mathrm{supp}(\underline{g}))$. 
So the construction \eqref{eq:B(g)} is indeed a generalization of  \eqref{eq:def_block_monoid} (however, see Remark~\ref{remark:G_0-g} in Section~\ref{sec:repetition}). 

Interest in the monoids $\mathcal{B}(G_0)$ and $\mathcal{B}(\underline{g})$ and their semigroup rings comes from several mathematical topics: 
factorization theory in monoids, multiplicative ideal theory, zero-sum theory, invariant theory, 
toric varieties, binomial or toric ideals. 
For example, it has been long known that results on the atoms in monoids of the form $\mathcal{B}(\underline{g})$ can be reformulated in terms of generators or degree bounds for  
rings of invariants of abelian groups (see e.g. \cite{cziszter_domokos_geroldinger} for some details and references). 

In this paper we shall study presentations of reduced affine monoids, with a paticular attention on the monoids $\mathcal{B}(\underline{g})$. 
By a {\it monoid} we mean a commutative cancellative semigroup with an identity element. 
A monoid is {\it affine} if it is a finitely generated submonoid of a finitely generated free abelian froup. 
We say that a monoid is {\it reduced} if the identity element is its only unit (invertible element). Recall that by Grillet's theorem, reduced affine monoids are exactly the monoids that are isomorphic to a finitely generated submonoid of the additive monoid $\mathbb{N}_0^k$ for some $k$ (see for example \cite[Proposition 2.16]{bruns_gubeladze}). 

Let $S$ be a reduced affine monoid (written multiplicatively) and $\mathcal{A}(S)$ the set of atoms in $S$. 
Then $\mathcal{A}(S)$ is finite, and it is a minimal generating set of $S$, see for example Proposition 1.1.7 in \cite{geroldinger_halter-koch}. Denote by 
$\mathbb{N}_0^{\mathcal{A}(S)}$ the additive monoid of functions from $\mathcal{A}(S)$ into the additive monoid $\mathbb{N}_0$ of non-negative integers; this is a free monoid generated by $|\mathcal{A}(S)|$ elements. 
Take  commuting indeterminates $\{x_a \mid a\in \mathcal{A}(S)\}$,  and let $M$ denote the free monoid  generated by them (written multiplicatively): 
\[M=\{x^{\alpha}= \prod_{a\in\mathcal{A}(S)}x_a^{\alpha(a)} \mid \alpha
\in \mathbb{N}_0^{\mathcal{A}(S)}\}\] 
with multiplication $x^{\alpha}x^{\gamma}=x^{\alpha+\gamma}$.  
Consider  the  unique semigroup homomorphism 
\[\pi: M\to S \mbox{ given by }
x_a\mapsto a, \quad a\in \mathcal{A}(S)\] 
(called {\it factorization homomorphism} in \cite{blanco_etal}).  
Denote by $\sim_S$ the congruence on $M$  defined by 
\[x^{\alpha}\sim_S x^{\gamma} \iff \pi(x^{\alpha})=\pi(x^{\gamma})\in S,\] 
and call it the {\it defining congruence of $S$} (it is called {\it the monoid of relations of $S$} in \cite{blanco_etal}).   
The semigroup homomorphism $\pi$ factors through a monoid isomorphism 
\[M/\sim_S\ \stackrel{\cong}\longrightarrow S.\] 
Formally  the  congruence $\sim_S$ will be viewed as a subset of 
$M\times M$. 
The congruence $\sim_S$ is finitely generated by \cite{redei}.  
By a {\it presentation} of $S$ we mean a finite subset of $M\times M$ generating the conguence 
$\sim_S$ (see for example \cite[Section I.4]{gilmer} for basic notions related to semigroup congruences). 

Now let us summarize the content of the present paper. 
Our Theorem~\ref{thm:tildeS_presentation} tells in particular how a presentation of 
$\mathcal{B}(\underline{g})$ can be derived from a presentation of $\mathcal{B}(\mathrm{supp}(\underline{g}))$. It turns out that in most cases the catenary degree 
(cf. Definition~\ref{def:cat_degree})  of 
$\mathcal{B}(\underline{g})$ coincides with the catenary degree of 
$\mathcal{B}(\mathrm{supp}(\underline{g}))$ (see Corollary~\ref{cor:B(chi-tilde)}). 
Combining Theorem~\ref{thm:tildeS_presentation}  with a degree bound of Derksen \cite{derksen} for the defining relations of the ring of invariants of a linearly reductive group, 
we derive in Theorem~\ref{thm:c_w(B(g))} degree bounds for a presentation of 
$\mathcal{B}(\underline{g})$. In order to formulate this result we introduce the notion of 
graded catenary degree of a graded monoid (see Definition~\ref{def:graded_cat_degree}), which is a refinement of (and an upper bound for) the ordinary catenary degree. These results on presentations   of monoids have an equivalent formulation in terms of generators of (binomial) ideals of relations of semigroup rings of monoids, this is  pointed out in Section~\ref{sec:ringtheoretic-catenarydegree}. 
A Gr\"obner basis version of Theorem~\ref{thm:tildeS_presentation} is given in  
Theorem~\ref{thm:groebner}. Since the semigroup rings of the form  
$\mathbb{C}[\mathcal{B}(G)]$ are exactly the rings of invariants of abelian groups, 
the results have relevance for toric varieties;  
this is expanded a bit in Section~\ref{sec:toric}, and some examples of toric quiver varieties are reviewed.  
We point out finally that Theorem~\ref{thm:groebner} provides a source of Koszul  algebras.


\section{Ring theoretic characterization of the catenary degree}\label{sec:ringtheoretic-catenarydegree}

From now on $S$ will be a reduced affine monoid. 
The {\it catenary degree}  $c(S)$ of the monoid $S$ is a basic arithmetic invariant studied in factorization theory, let us recall its definition.  For $\alpha\in \mathbb{N}_0^{\mathcal{A}(S)}$ we set 
\[|\alpha|=\sum_{a\in\mathcal{A}(S)}|\alpha(a)|\]  
and 
\[a^{\alpha}:=
\prod_{a\in\mathcal{A}(S)}a^{\alpha(a)} \in S.\]  
For $\alpha,\gamma\in  \mathbb{N}_0^{\mathcal{A}(S)}$ we write $\mathrm{gcd}(\alpha,\gamma)$ for the greatest common divisor of $\alpha,\gamma$ in the additive monoid $\mathbb{N}_0^{\mathcal{A}(S)}$ 
(i.e. $\mathrm{gcd}(\alpha,\gamma)(a)=\min\{\alpha(a),\gamma(a)\}$ for each $a\in \mathcal{A}(S)$). 

\begin{definition}\label{def:cat_degree} {\rm (see 
\cite[Definition 1.6.1]{geroldinger_halter-koch})}
For $\alpha,\gamma\in \mathbb{N}_0^{\mathcal{A}(S)}$ set 
\[d(\alpha,\gamma):=\max\{|\alpha-\mathrm{gcd}(\alpha,\gamma)|,
|\gamma-\mathrm{gcd}(\alpha,\gamma)|\}.\]
Given the monoid $S$ as above, we say that $\alpha$ and $\gamma$ can be connected by a $d$-chain if there exists a sequence $\alpha^{(0)}=\alpha,\alpha^{(1)},\dots,\alpha^{(k)}=\gamma\in
\mathbb{N}_0^{\mathcal{A}(S)}$ such that $a^{\alpha^{(j)}}= a^{\alpha^{(j+1)}}$ and $d(\alpha^{(j)},\alpha^{(j+1)})\le d$ for $j=0,1,\dots,k-1$. 
The \emph{catenary degree} $c(S)$ is the minimal non-negative integer $d$ such that if $a^{\alpha}=a^{\gamma}$, 
then $\alpha$ and $\gamma$ can be connected by a $d$-chain. 
\end{definition}

\begin{remark} Note that $c(S)=0$ if and only if $S$ is a free (or factorial) monoid 
(i.e. $S$ is isomorphic to the additive monoid $\mathbb{N}_0^m$ for some $m$), and 
$c(S)$ is never equal to $1$. 
\end{remark}

Characterizations of the catenary degree and its variants are given in \cite{Ph10a}, \cite{Ph15a}. In particular, \cite[Proposition 16]{Ph10a} characterizes the catenary degree in terms of the \emph{monoid} of relations. 
Now extending an observation from \cite{chapman_etal} we formulate a characterization of the catenary degree in terms of semigroup congruences. 
Denote by $c'(S)$  the minimal non-negative integer $d$ such that there exists a 
generating set $\Lambda\subset M\times M$ of the semigroup congruence $\sim_S$,  satisfying that 
for all $(x^{\alpha},x^{\gamma})\in \Lambda$ we have $|\alpha|\le d$ and $|\gamma|\le d$.  
An explicit description of the semigroup congruence generated by a subset of $M\times M$ can be found for example in \cite[page 176]{herzog}. 

\begin{proposition}\label{prop:catenary} 
We have $c(S)=c'(S)$.  
\end{proposition}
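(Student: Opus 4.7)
The plan is to prove the two inequalities separately, both via the natural dictionary between $d$-chains of factorizations and single-step applications of congruence generators. The key technical ingredient is the elementary identity
\[\gcd(\beta+\eta,\beta+\delta) \;=\; \beta + \gcd(\eta,\delta)\]
in $\mathbb{N}_0^{\mathcal{A}(S)}$, a consequence of the fact that the gcd is the componentwise minimum. It immediately implies translation invariance of the $d$-distance of Definition~\ref{def:cat_degree}: $d(\beta+\eta,\beta+\delta)=d(\eta,\delta)$.

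For the inequality $c(S)\le c'(S)$, set $d:=c'(S)$ and pick a generating set $\Lambda$ of $\sim_S$ with the bound required by the definition of $c'(S)$. Given $\alpha\sim_S\gamma$, the standard description of the congruence generated by $\Lambda$ (see \cite[p.~176]{herzog}) produces a chain $\alpha=\alpha^{(0)},\alpha^{(1)},\ldots,\alpha^{(k)}=\gamma$ whose consecutive terms have the form $\alpha^{(j)}=\beta_j+\eta_j$, $\alpha^{(j+1)}=\beta_j+\delta_j$ with $(x^{\eta_j},x^{\delta_j})$ lying in $\Lambda\cup\Lambda^{-1}$. Translation invariance of $d$ together with $|\eta_j|,|\delta_j|\le d$ gives $d(\alpha^{(j)},\alpha^{(j+1)})= d(\eta_j,\delta_j)\le d$, while $a^{\alpha^{(j)}}=a^{\alpha^{(j+1)}}$ is immediate from $\Lambda\subseteq\,\sim_S$. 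Hence this is a $d$-chain and $c(S)\le d$.

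For the reverse inequality $c'(S)\le c(S)$, put $d:=c(S)$ and propose the candidate generating set
\[\Lambda:=\bigl\{(x^\eta,x^\delta)\in M\times M \,\bigm|\, a^\eta=a^\delta,\ \gcd(\eta,\delta)=0,\ |\eta|\le d,\ |\delta|\le d\bigr\},\]
which is automatically finite because $\mathcal{A}(S)$ is finite. Given any $\alpha,\gamma$ with $a^\alpha=a^\gamma$, pick a $d$-chain $\alpha^{(0)},\ldots,\alpha^{(k)}$ supplied by Definition~\ref{def:cat_degree}, and for each $j$ set $\mu_j:=\gcd(\alpha^{(j)},\alpha^{(j+1)})$, $\eta_j:=\alpha^{(j)}-\mu_j$, $\delta_j:=\alpha^{(j+1)}-\mu_j$. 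Then $\gcd(\eta_j,\delta_j)=0$, $a^{\eta_j}=a^{\delta_j}$, and the bound $d(\alpha^{(j)},\alpha^{(j+1)})=\max\{|\eta_j|,|\delta_j|\}\le d$ forces $(x^{\eta_j},x^{\delta_j})\in\Lambda$. Multiplying this primitive relation by $x^{\mu_j}$ recovers $(x^{\alpha^{(j)}},x^{\alpha^{(j+1)}})$ as one elementary step of the congruence generated by $\Lambda$; transitivity yields $(x^\alpha,x^\gamma)$ in this congruence, so $\Lambda$ generates $\sim_S$ and $c'(S)\le d$.

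I do not foresee a real obstacle: both directions rest on the same gcd-translation identity, and everything else is bookkeeping. The one point that deserves a moment's care is that a step in the congruence generated by $\Lambda$ may multiply a generator by an arbitrary monomial $x^{\beta}$, not just by $x^{\gcd}$; translation invariance of $d$ is exactly what guarantees that this extra freedom neither inflates the $d$-distance in the first direction nor shrinks the available factorization space in the second, so that the two quantities end up equal rather than separated by a gap.
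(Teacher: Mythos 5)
Your proof is correct. For the inequality $c'(S)\le c(S)$ your argument is essentially the paper's: the paper takes as generators \emph{all} congruent pairs with both factorization lengths at most $c(S)$, while you restrict to the coprime (primitive) pairs $\gcd(\eta,\delta)=0$; both work, and the extraction of the primitive part from each link of the $d$-chain is the same mechanism. One small point you pass over silently: the step from $a^{\alpha^{(j)}}=a^{\alpha^{(j+1)}}$ to $a^{\eta_j}=a^{\delta_j}$ after stripping off $\mu_j=\gcd(\alpha^{(j)},\alpha^{(j+1)})$ uses that $S$ is cancellative (the paper says this explicitly); it is a one-line fix, not a gap. The more substantive difference is in the direction $c(S)\le c'(S)$: the paper simply cites \cite[Proposition 3.1]{chapman_etal}, whereas you give a self-contained argument from the standard description of the congruence generated by $\Lambda$ together with the translation invariance $d(\beta+\eta,\beta+\delta)=d(\eta,\delta)$, which follows from $\gcd(\beta+\eta,\beta+\delta)=\beta+\gcd(\eta,\delta)$. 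This buys a proof with no external dependency and isolates exactly the point --- translation invariance of the distance --- that makes the elementary congruence steps compatible with $d$-chains; the cost is only a little extra bookkeeping.
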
 

\begin{proof} 
Set 
\[\Lambda:=\{(x^{\alpha},x^{\gamma})\mid x^{\alpha}\sim_S x^{\gamma} 
\mbox{ and }|\alpha|\le c(S),\ |\gamma|\le c(S)\}.\] 
We claim that $\Lambda$ generates the semigroup congruence $\sim_S$. 
Indeed, take a pair $(\alpha,\gamma)\in \mathbb{N}_0^{\mathcal{A}(S)}\times  \mathbb{N}_0^{\mathcal{A}(S)}$ such that $x^{\alpha}\sim_S x^{\gamma}$. Then 
$\alpha$ and $\gamma$ can be connected by a $c(S)$-chain $\alpha^{(0)}=\alpha,\alpha^{(1)},\dots,\alpha^{(k)}=\gamma$. For $i=0,1,\dots,k-1$ the pair 
$(\alpha^{(i)},\alpha^{(i+1)})$ is of the form $(\beta^{(i)}+\delta^{(i)},\rho^{(i)}+\delta^{(i)})$, 
where $\delta^{(i)},\beta^{(i)},\rho^{(i)}\in \mathbb{N}_0^{\mathcal{A}(S)}$,  
$|\beta^{(i)}|\le c(S)$, $|\rho^{(i)}|\le c(S)$. Moreover, since $S$ is cancellative, $a^{\alpha^{(i)}}=a^{\alpha^{(i+1)}}$, implies  $a^{\beta^{(i)}}=a^{\rho^{(i)}}$,  so $x^{\beta^{(i)}}\sim_S x^{\rho^{(i)}}$. 
Thus $(x^{\beta^{(i)}}, x^{\rho^{(i)}})\in \Lambda$. It follows that 
$(x^{\alpha^{(i)}},x^{\alpha^{(i+1)}})=(x^{\beta^{(i)}}x^{\delta^{(i)}},x^{\rho^{(i)}}x^{\delta^{(i)}})$ belongs to the congruence generated by $\Lambda$ for $i=0,\dots,k-1$, implying in turn that $(x^{\alpha},x^{\gamma})=(x^{\alpha^{(0)}},x^{\alpha^{(k)}})$ belongs to the congruence generated by $\Lambda$. 
This proves the inequality $c'(S)\le c(S)$. 

The reverse inequality $c(S)\le c'(S)$ is 
pointed out in \cite[Proposition 3.1]{chapman_etal}.  
\end{proof}

Fix a commutative ring $R$ (having an identity element) and consider the semigroup rings $R[M]$ and $R[S]$. Note that $R[M]=R[x_a\mid a\in\mathcal{A}(S)]$ is the polynomial ring over $R$ with indeterminates $\{x_a\mid a\in \mathcal{A}(S)\}$. The  monoid homomorphism $\pi:M\to S$ extends uniquely to an $R$-algebra 
homomorphism 
\begin{equation}\label{eq:pi_R}
\pi_R:R[x_a \mid a\in\mathcal{A}(S)]\to R[S], \quad  \pi(x^{\alpha})=a^{\alpha}.
\end{equation} 
The statement below is well known, see \cite[Proposition 1.5]{herzog} or 
\cite[Chapter II.7.]{gilmer}: 

\begin{proposition}\label{prop:catenary_char} 
The following conditions are equivalent for a set of pairs   
$B\subset \mathbb{N}_0^{\mathcal{A}(S)}\times \mathbb{N}_0^{\mathcal{A}(S)}$: 
\begin{itemize} 
\item[(1)] The semigroup congruence $\sim_S$ is   generated by $\{(x^{\alpha},x^{\gamma}) \mid (\alpha,\gamma)\in B\}$. 
\item[(2)] The ideal $\ker(\pi_R)$ is generated by the binomials $\{x^{\alpha}-x^{\gamma}\mid (\alpha,\gamma)\in B\}$. 
\end{itemize} 
\end{proposition}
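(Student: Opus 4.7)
The plan is to handle the two implications separately, leveraging the natural $S$-grading on $R[M]$ in which $x^{\alpha}$ has degree $a^{\alpha}\in S$; under this grading $\pi_R$ becomes a homomorphism of $S$-graded $R$-algebras, and the $s$-component of $\ker(\pi_R)$ is precisely the sum-zero $R$-submodule of the free $R$-module on the fiber $\pi^{-1}(s)\subset \mathbb{N}_0^{\mathcal{A}(S)}$.

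For $(1)\Rightarrow(2)$, I would first note that $\ker(\pi_R)$ is spanned as an $R$-module by the binomials $x^{\alpha}-x^{\gamma}$ with $x^{\alpha}\sim_S x^{\gamma}$, because $R[M]$ is free on its monomial basis and $\pi_R$ identifies monomials with equal image in $S$. Given such a pair, hypothesis $(1)$ produces a finite chain $\alpha=\alpha^{(0)},\ldots,\alpha^{(k)}=\gamma$ in which each consecutive step is an elementary move $\alpha^{(i)}=\beta^{(i)}+\delta^{(i)}$, $\alpha^{(i+1)}=\rho^{(i)}+\delta^{(i)}$ with $(\beta^{(i)},\rho^{(i)})\in B$ up to transposition. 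The telescoping identity
\[x^{\alpha}-x^{\gamma}=\sum_{i=0}^{k-1}x^{\delta^{(i)}}\bigl(x^{\beta^{(i)}}-x^{\rho^{(i)}}\bigr)\]
then exhibits $x^{\alpha}-x^{\gamma}$ as a member of the ideal generated by the $B$-binomials.

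For $(2)\Rightarrow(1)$, let $\equiv$ denote the congruence on $M$ generated by $\{(x^{\alpha},x^{\gamma})\mid(\alpha,\gamma)\in B\}$; the inclusion $\equiv\ \subseteq\ \sim_S$ is automatic, so only the reverse containment needs argument. Given $x^{\alpha}\sim_S x^{\gamma}$ with common $S$-degree $s=a^{\alpha}=a^{\gamma}$, $(2)$ provides an expression $x^{\alpha}-x^{\gamma}=\sum_j f_j(x^{\alpha_j}-x^{\gamma_j})$ with $(\alpha_j,\gamma_j)\in B$. Using the $S$-grading I would replace each $f_j$ by its $S$-homogeneous component of the correct degree (valid because the left-hand side and every binomial factor are $S$-homogeneous) and then expand these components as $R$-linear combinations of monomials $x^{\delta_{jk}}$ to arrive at
\[x^{\alpha}-x^{\gamma}=\sum_{j,k}c_{jk}\bigl(x^{\delta_{jk}+\alpha_j}-x^{\delta_{jk}+\gamma_j}\bigr),\]
an identity in which every pair $(\delta_{jk}+\alpha_j,\delta_{jk}+\gamma_j)$ lies in $\equiv$, because $\equiv$ is stable under coordinate-wise addition.

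The main obstacle is passing from this formal algebraic identity to the combinatorial conclusion $\alpha\equiv\gamma$. To clear it I would introduce the $R$-linear map $\psi$ from the free $R$-module with basis $\{y_\beta\mid \beta\in\pi^{-1}(s)\}$ to the free $R$-module with basis $\{z_{[\beta]}\mid [\beta]\in\pi^{-1}(s)/\equiv\}$ determined by $y_\beta\mapsto z_{[\beta]}$, and identify $x^\beta$ with $y_\beta$. Every summand on the right-hand side of the last displayed equation lies in $\ker(\psi)$, so the same is true of the left-hand side; thus $z_{[\alpha]}=z_{[\gamma]}$, and the $R$-linear independence of distinct basis vectors forces $[\alpha]=[\gamma]$, i.e.\ $\alpha\equiv\gamma$, as required.
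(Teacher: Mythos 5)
Your argument is correct. Note that the paper itself gives no proof of this proposition; it is quoted as well known with references to Herzog and to Gilmer, and your proof is essentially the classical argument from those sources: the telescoping identity $x^{\alpha}-x^{\gamma}=\sum_i x^{\delta^{(i)}}(x^{\beta^{(i)}}-x^{\rho^{(i)}})$ over an elementary chain for $(1)\Rightarrow(2)$, and for $(2)\Rightarrow(1)$ the $S$-grading of $R[M]$ together with the linear map onto the free module indexed by $\equiv$-classes of a fiber, which correctly circumvents the cancellation problems that a naive term-by-term substitution would face. Two small points worth making explicit: both conditions implicitly require $B\subseteq\ \sim_S$ (otherwise neither ``generates'' statement is meaningful), which is what licenses your claim that $\equiv\ \subseteq\ \sim_S$ and that each generator binomial is $S$-homogeneous; and the final step $z_{[\alpha]}=z_{[\gamma]}\Rightarrow[\alpha]=[\gamma]$ uses that $1\neq 0$ in $R$, i.e.\ $R$ is not the zero ring (consistent with Remark~\ref{remark:ring_does_not_matter}).
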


\begin{remark}\label{remark:ring_does_not_matter}
Condition (1) of Proposition~\ref{prop:catenary_char} does not depend on the ring $R$, 
therefore a set of binomials generates the ideal $\ker(\pi_R)$ for some ring $R$ if and only if it generates $\ker(\pi_R)$ for any ring $R$. 
\end{remark} 

Proposition~\ref{prop:catenary} and Proposition~\ref{prop:catenary_char} imply  the following ring theoretic characterization of $c(S)$: 

\begin{corollary}\label{cor:catenary_char}
The catenary degree $c(S)$ is the minimal positive integer $d$ such that the kernel 
of $\pi_R:R[x_a\mid a\in\mathcal{A}(S)]\to R[S]$ is generated (as an ideal) by binomials of degree at most $d$ for some (hence any) commutative ring $R$ (where  
$R[x_a\mid a\in\mathcal{A}(S)]$ is graded in the standard way, namely the generators 
$x_a$ have degree $1$ and the non-zero scalars in $R$ have degree $0$). 
\end{corollary}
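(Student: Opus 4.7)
The plan is to chain together the two preceding propositions, with only a small translation step in between. By Proposition~\ref{prop:catenary}, $c(S)$ equals $c'(S)$, which by definition is the minimal $d$ such that the congruence $\sim_S$ admits a generating set of pairs $(x^{\alpha},x^{\gamma})$ all satisfying $|\alpha|\le d$ and $|\gamma|\le d$. By Proposition~\ref{prop:catenary_char}, a collection of such pairs generates $\sim_S$ if and only if the corresponding binomials $x^{\alpha}-x^{\gamma}$ generate the ideal $\ker(\pi_R)$ in $R[x_a\mid a\in\mathcal{A}(S)]$. Thus the only thing to verify is that the condition $|\alpha|,|\gamma|\le d$ matches exactly the condition that the binomial has degree at most $d$.

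This identification of degrees is immediate from the choice of grading. Since each $x_a$ has degree $1$, the monomial $x^{\alpha}$ has degree $|\alpha|$, so for two distinct monomials the binomial $x^{\alpha}-x^{\gamma}$ has total degree $\max(|\alpha|,|\gamma|)$. Hence $|\alpha|,|\gamma|\le d$ is equivalent to $\deg(x^{\alpha}-x^{\gamma})\le d$, giving a bijective correspondence between generating sets of $\sim_S$ by pairs bounded by $d$ and generating sets of $\ker(\pi_R)$ by binomials of degree $\le d$. Taking the infima of the admissible $d$'s on the two sides, the resulting integers agree, yielding the desired equality.

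The independence from the ring $R$ is already supplied by Remark~\ref{remark:ring_does_not_matter}, so the integer produced on the ring side truly depends only on $S$, in accordance with the ``some (hence any)'' clause. In sum there is no substantive obstacle: all the real work was done in Propositions~\ref{prop:catenary} and~\ref{prop:catenary_char}, and the present statement results from combining them with the trivial bookkeeping identification of the degree of a binomial with the larger of the two exponent-vector norms. The one minor point of care is to observe that in Proposition~\ref{prop:catenary_char} a binomial generating set of $\ker(\pi_R)$ is already guaranteed to exist (so one need not worry about degree-increase when replacing a general ideal generator by a binomial one), which is exactly what allows us to pass freely between the two pictures.
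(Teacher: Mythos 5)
Your argument is exactly the one the paper intends: the paper gives no separate proof of this corollary, stating only that it follows from Proposition~\ref{prop:catenary} and Proposition~\ref{prop:catenary_char}, and your bookkeeping identification of $\deg(x^{\alpha}-x^{\gamma})$ with $\max(|\alpha|,|\gamma|)$ under the standard grading is the correct (and only) glue needed. The proposal is correct and matches the paper's approach.
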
 


\section{Graded monoids}\label{sec:graded_monoid}

Let $S$ be a  {\it graded} monoid; that is, $S$ is partitioned into the disjoint union of subsets 
$S_d$, $d\in\mathbb{N}_0$, such that $S_d\cdot S_e\subseteq S_{d+e}$. For $s\in S$ write $|s|=d$ if $s\in S_d$. The identity element of $S$ necessarily belongs to $S_0$. We call a graded monoid {\it connected graded} if $S_0$ consists only of the identity element.  
 It seems natural to modify Definition~\ref{def:cat_degree} for graded monoids as follows: 
\begin{definition}\label{def:graded_cat_degree}
Given a connected graded reduced affine monoid $S$,
for $\alpha,\gamma\in \mathbb{N}_0^{\mathcal{A}(S)}$ set 
\[|\alpha|_{\mathrm{gr}}=\sum_{a\in\mathcal{A}(S)}\alpha(a)|a|\] 
and 
\begin{align*}
d_{\mathrm{gr}}(\alpha,\gamma):=
\max\{|\alpha-\mathrm{gcd}(\alpha,\gamma)|_{\mathrm{gr}},
|\gamma-\mathrm{gcd}(\alpha,\gamma)|_{\mathrm{gr}}\}.
\end{align*}
We say that \emph{$\alpha$ and $\gamma$ can be connected by a chain of weight at most $d$} if there exists a sequence $\alpha^{(0)}=\alpha,\alpha^{(1)},\dots,\alpha^{(k)}=\gamma\in
\mathbb{N}_0^{\mathcal{A}(S)}$ such that $a^{\alpha^{(j)}}= a^{\alpha^{(j+1)}}$ and $d_{\mathrm{gr}}(\alpha^{(j)},\alpha^{(j+1})\le d$ for $j=0,1,\dots,k-1$. 
The \emph{graded catenary degree} $c_{\mathrm{gr}}(S)$ is the minimal $d$ such that if $a^{\alpha}=a^{\gamma}$, 
then $\alpha$ and $\gamma$ can be connected by a chain of weight at most $d$. 
\end{definition}

As a straightforward modification of Proposition~\ref{prop:catenary} we get the following. 

\begin{proposition} \label{prop:graded-catenary} 
The graded catenary degree $c_{\mathrm{gr}}(S)$ is the minimal non-negative integer $d$ such that 
there exists a 
generating set $\Lambda\subset M\times M$ of the semigroup congruence $\sim_S$ satisfying  that 
for all $(x^{\alpha},x^{\gamma})\in \Lambda$ we have 
$|\alpha|_{\mathrm{gr}}\le d$ and 
$|\gamma|_{\mathrm{gr}}\le d$. 
\end{proposition}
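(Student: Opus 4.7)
The plan is to adapt the proof of Proposition~\ref{prop:catenary} line-by-line, replacing the unweighted length $|\cdot|$ by the graded length $|\cdot|_{\mathrm{gr}}$ throughout. Write $c'_{\mathrm{gr}}(S)$ for the quantity defined on the right-hand side of the proposition; I would establish the two inequalities $c'_{\mathrm{gr}}(S) \le c_{\mathrm{gr}}(S)$ and $c_{\mathrm{gr}}(S) \le c'_{\mathrm{gr}}(S)$ separately.

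For $c'_{\mathrm{gr}}(S) \le c_{\mathrm{gr}}(S)$, I would set
\[
\Lambda := \{(x^{\alpha}, x^{\gamma}) \in M\times M \mid x^{\alpha} \sim_S x^{\gamma},\ |\alpha|_{\mathrm{gr}} \le c_{\mathrm{gr}}(S),\ |\gamma|_{\mathrm{gr}} \le c_{\mathrm{gr}}(S)\}
\]
and show that $\Lambda$ generates $\sim_S$. Given $x^{\alpha} \sim_S x^{\gamma}$, pick a chain $\alpha = \alpha^{(0)}, \ldots, \alpha^{(k)} = \gamma$ of graded weight at most $c_{\mathrm{gr}}(S)$; decompose each consecutive pair as $(\beta^{(i)} + \delta^{(i)}, \rho^{(i)} + \delta^{(i)})$ with $\delta^{(i)} = \mathrm{gcd}(\alpha^{(i)}, \alpha^{(i+1)})$, so that $|\beta^{(i)}|_{\mathrm{gr}}, |\rho^{(i)}|_{\mathrm{gr}} \le c_{\mathrm{gr}}(S)$; use cancellativity to conclude $x^{\beta^{(i)}} \sim_S x^{\rho^{(i)}}$, hence $(x^{\beta^{(i)}}, x^{\rho^{(i)}}) \in \Lambda$; multiplying by $x^{\delta^{(i)}}$ and concatenating the elementary steps places $(x^{\alpha}, x^{\gamma})$ in the congruence generated by $\Lambda$.

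For the reverse inequality, I would fix a generating set $\Lambda$ of $\sim_S$ meeting the weight bound $d := c'_{\mathrm{gr}}(S)$ and use the explicit description of the congruence generated by $\Lambda$ (cf.\ \cite[page 176]{herzog}): any $x^{\alpha} \sim_S x^{\gamma}$ is linked by a sequence of elementary steps of the form $(\beta + \delta, \rho + \delta)$ with $(x^{\beta}, x^{\rho}) \in \Lambda$ (or its reverse). A coordinatewise calculation gives $\mathrm{gcd}(\beta + \delta, \rho + \delta) = \mathrm{gcd}(\beta, \rho) + \delta$, so $(\beta + \delta) - \mathrm{gcd}(\beta + \delta, \rho + \delta) = \beta - \mathrm{gcd}(\beta, \rho)$, whose graded length is bounded by $|\beta|_{\mathrm{gr}} \le d$; symmetrically for the other half of $d_{\mathrm{gr}}$. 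Hence every elementary step of the chain has graded weight at most $d$, witnessing $c_{\mathrm{gr}}(S) \le d$.

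The only real subtlety — and the step most in need of verification — is the identity $\mathrm{gcd}(\beta + \delta, \rho + \delta) = \mathrm{gcd}(\beta, \rho) + \delta$, which ensures that adjoining the common factor $\delta$ does not inflate the graded weight of an elementary step (for unweighted length this plays the same role in Proposition~\ref{prop:catenary}, but there it is almost invisible because lengths just add). Once this is in hand, the argument is strictly parallel to the proof of Proposition~\ref{prop:catenary} and no new ideas are required.
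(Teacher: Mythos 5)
Your proposal is correct and matches the paper's intent exactly: the paper's entire proof of this proposition is the remark that it is ``a straightforward modification of Proposition~\ref{prop:catenary}'', which is precisely the line-by-line adaptation you carry out, and your key identity $\mathrm{gcd}(\beta+\delta,\rho+\delta)=\mathrm{gcd}(\beta,\rho)+\delta$ does hold coordinatewise. The only difference is that for the inequality $c_{\mathrm{gr}}(S)\le c'_{\mathrm{gr}}(S)$ the paper (via Proposition~\ref{prop:catenary}) simply cites \cite[Proposition 3.1]{chapman_etal}, whereas you supply the underlying elementary-step argument explicitly; this is a harmless and correct filling-in of detail rather than a different route.
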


The grading of $S$ induces a grading on the semigroup ring 
$R[S]=\bigoplus_{d=0}^{\infty}R[S]_d$, where $R[S]_d$ is the $R$-submodule generated by $S_d$. Lift the grading to $M$ and $R[M]$ by setting the degree of $x_a$ to be equal to the degree $|a|$ of $a\in\mathcal{A}(S)$. Then the map 
$\pi_R:R[x_a\mid a\in\mathcal{A}(S)]\to R[S]$ is a homomorphism of graded algebras, and so $\ker(\pi_R)$ is a homogeneous ideal. 
Our assumptions on the grading imply that all indeterminates $x_a$ have positive degree. 
Moreover, $x^{\alpha}\sim_S x^{\gamma}$ implies that $a^{\alpha}$ and $a^{\gamma}$ belong to the same homogeneous component of $S$, and therefore the binomials in $\ker(\pi_R)$ are homogeneous. For an ideal $I$ in 
$R[x_a\mid a\in\mathcal{A}(S)]$ 
denote  by $\mu(I)$ the minimal non-negative integer $d$ such that $I$ is generated by elements of degree at most $d$; this number is finite for any binomial ideal $I$. 

\begin{corollary} \label{cor:mu=c}
For any connected graded reduced affine monoid we have the equality 
\[c_{\mathrm{gr}}(S)=\mu(\ker(\pi_R)),\] 
where $R[x_a\mid a\in \mathcal{A}(S)]$ is endowed with the grading that makes $\pi_R$ a homomorphism of graded algebras. 
\end{corollary}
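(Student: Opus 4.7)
The plan is to imitate the proof of Corollary~\ref{cor:catenary_char}, but using the graded versions of the two underlying propositions. Concretely, I would combine Proposition~\ref{prop:graded-catenary} with Proposition~\ref{prop:catenary_char}, together with the observation that a homogeneous binomial ideal in a positively graded polynomial ring always admits a generating set consisting of homogeneous binomials realizing the degree bound $\mu$.

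First I would set up the grading. Since $S$ is connected graded, every atom $a\in \mathcal{A}(S)$ has $|a|>0$, so the grading on $R[M]=R[x_a\mid a\in \mathcal{A}(S)]$ given by $\deg(x_a)=|a|$ is positive, and $\deg(x^{\alpha})=|\alpha|_{\mathrm{gr}}$. For any pair $(\alpha,\gamma)$ with $x^{\alpha}\sim_S x^{\gamma}$ the equality $a^{\alpha}=a^{\gamma}\in S$ forces $|\alpha|_{\mathrm{gr}}=|a^{\alpha}|=|a^{\gamma}|=|\gamma|_{\mathrm{gr}}$, hence $x^{\alpha}-x^{\gamma}$ is homogeneous of degree $|\alpha|_{\mathrm{gr}}$. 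Consequently, the bound $|\alpha|_{\mathrm{gr}}\le d$ and $|\gamma|_{\mathrm{gr}}\le d$ appearing in Proposition~\ref{prop:graded-catenary} is equivalent to saying that the associated binomial $x^{\alpha}-x^{\gamma}$ is homogeneous of degree at most $d$.

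Next I would apply Proposition~\ref{prop:catenary_char}: a set $\Lambda=\{(x^{\alpha},x^{\gamma}):(\alpha,\gamma)\in B\}$ generates the congruence $\sim_S$ if and only if the binomials $\{x^{\alpha}-x^{\gamma}:(\alpha,\gamma)\in B\}$ generate the ideal $\ker(\pi_R)$. Combining this with the previous paragraph and Proposition~\ref{prop:graded-catenary}, $c_{\mathrm{gr}}(S)$ equals the minimal $d$ such that $\ker(\pi_R)$ admits a generating set of homogeneous binomials of degree at most $d$. So the only remaining point is to show that this equals $\mu(\ker(\pi_R))$, the minimum degree of any generating set of $\ker(\pi_R)$ (not just a binomial one).

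The inequality $\mu(\ker(\pi_R))\le c_{\mathrm{gr}}(S)$ is immediate. For the reverse, I would use that $\ker(\pi_R)$ is spanned as an $R$-module by the binomials $x^{\alpha}-x^{\gamma}$ with $a^{\alpha}=a^{\gamma}$: a polynomial $\sum_\alpha c_\alpha x^\alpha$ lies in $\ker(\pi_R)$ iff within each fibre of $\pi$ the coefficients sum to zero, and such an element is an $R$-linear combination of binomials supported in a single fibre. Hence every homogeneous element of $\ker(\pi_R)$ of degree $e$ is an $R$-linear combination of homogeneous binomials of degree $e$, so any homogeneous generating set of degrees bounded by $\mu(\ker(\pi_R))$ can be refined into a binomial generating set of the same degree bound. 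This proves $c_{\mathrm{gr}}(S)\le \mu(\ker(\pi_R))$ and completes the equality.

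The step I expect to require the most care is the last one, namely the reduction to binomial generators without assuming that $R$ is a field; but the fibrewise description of $\ker(\pi_R)$ (which underlies Proposition~\ref{prop:catenary_char} itself) makes it a routine consequence of the definition of $\mu$.
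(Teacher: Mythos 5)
Your proof is correct, and its skeleton --- combine Proposition~\ref{prop:catenary_char} with Proposition~\ref{prop:graded-catenary} after observing that every binomial in $\ker(\pi_R)$ is homogeneous for the grading $\deg(x_a)=|a|$ --- is the same as the paper's. Where you genuinely diverge is in the one nontrivial step: showing that the degree bound $\mu(\ker(\pi_R))$, defined via arbitrary generating sets, is already achieved by a \emph{binomial} generating set. The paper does this by appealing to the graded Nakayama lemma (every minimal homogeneous generating system of a homogeneous ideal realizes $\mu$ as its top degree, so one may take a minimal system consisting of binomials). You instead argue directly: replace a degree-bounded generating set by its homogeneous components, and then expand each homogeneous element of $\ker(\pi_R)$ as an $R$-linear combination of homogeneous binomials of the same degree supported on a single fibre of $\pi$. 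Your route is slightly longer but more elementary, and it is uniform in the base ring $R$: the Nakayama argument is really a statement over a field (or local ring), and over a general commutative ring one should first invoke Remark~\ref{remark:ring_does_not_matter} to reduce to that case, a point the paper leaves implicit. The only small elision in your write-up is the passage from a degree-bounded generating set to a degree-bounded \emph{homogeneous} generating set (take homogeneous components, which again lie in the homogeneous ideal $\ker(\pi_R)$); you use this implicitly when you speak of ``any homogeneous generating set of degrees bounded by $\mu(\ker(\pi_R))$''.
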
 

\begin{proof}  
Recall that any homogeneous generating system of a homogeneous ideal $I$ contains a minimal (with respect to inclusion) homogeneous generating system, and $\mu(I)$ is the maximal degree of an element in any minimal homogeneous generating system 
(this follows from the graded Nakayama lemma). 
The ideal $\ker(\pi_R)$ is generated by homogeneous binomials. 
Take a minimal set of binomials generating $\ker(\pi_R)$. 
Then the maximal degree of an element in this set of binomials equals $\mu(\ker(\pi_R))$ on one hand, and it equals $c_{\mathrm{gr}}(S)$ 
by Proposition~\ref{prop:catenary_char} and 
Proposition~\ref{prop:graded-catenary}. 
\end{proof}


\section{Repetition of elements} \label{sec:repetition} 

A surjective monoid homomorphism $\theta:T\to B$ between reduced affine monoids $T$ and $B$ is called a \emph{transfer homomorphism} if for any $t\in T$, $b,c\in B$ with $\theta(t)=bc$, 
there exist elements $u,v\in T$ such that $t=uv$ and $\theta(u)=b$, $\theta(v)=c$ 
(see \cite[Definition 3.2.1]{geroldinger_halter-koch}). 

Now let $\underline{g}$ be an $m$-tuple of elements in an abelian group $G$ and 
$\mathcal{B}(\underline{g})$, $\mathcal{B}(\mathrm{supp}(\underline{g}))$ the monoids introduced in Section~\ref{sec:intro}. 
The map 
\begin{equation}\label{eq:B(g)_transfer}
\mathbb{N}_0^m\to \mathbb{N}_0^{\mathrm{supp}(\underline{g})},\quad \alpha\mapsto (g\mapsto 
\sum_{g_i=g}\alpha_i)
\end{equation} 
restricts to a {\it transfer homomorphism}  
$\mathcal{B}(\underline{g})\to \mathcal{B}(\mathrm{supp}(\underline{g}))$. 
In factorization theory transfer homomorphisms are used to 
reduce the computation of arithmetic invariants of monoids to the corresponding invariants of other monoids (frequently in block monoids). 
In particular, it is known that the catenary degrees of monoids connected by a transfer homomorphism 
are linked as follows: 

\begin{lemma}\label{lemma:catenary_transfer} \cite[Theorem 3.2.5.5]{geroldinger_halter-koch} 
Let $\theta:T\to B$ be a transfer homomorphism, where $T$ and $B$ are reduced affine monoids. Then we have the inequalities 
\[c(B)\le c(T)\le \max\{c(B),c(T,\theta)\}\] 
(see  \cite[page 171]{geroldinger_halter-koch} for the definition of $c(T,\theta)$). 
\end{lemma}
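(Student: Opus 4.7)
The plan is to prove the two inequalities separately, relying on two standard consequences of the definition of a transfer homomorphism: (i) $\theta$ maps atoms of $T$ to atoms of $B$ (this uses that in a reduced monoid $\theta^{-1}(1)=\{1\}$, as is standard for transfer homomorphisms), so there is a well-defined induced map $\theta_*:\mathbb{N}_0^{\mathcal{A}(T)}\to \mathbb{N}_0^{\mathcal{A}(B)}$ with $\theta_*(\alpha)(b)=\sum_{\theta(a)=b}\alpha(a)$; (ii) for every $t\in T$ and every factorization $\beta\in\mathbb{N}_0^{\mathcal{A}(B)}$ of $\theta(t)$, an iterated application of the transfer property produces a lift $\alpha\in\mathbb{N}_0^{\mathcal{A}(T)}$ with $a^{\alpha}=t$ and $\theta_*(\alpha)=\beta$.

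For $c(B)\le c(T)$, I would take $\beta,\beta'\in\mathbb{N}_0^{\mathcal{A}(B)}$ with $b^{\beta}=b^{\beta'}$, fix a common preimage $t\in\theta^{-1}(b^{\beta})$, and use (ii) to lift $\beta$ and $\beta'$ to $\alpha,\alpha'\in\mathbb{N}_0^{\mathcal{A}(T)}$ with $a^{\alpha}=a^{\alpha'}=t$. A $c(T)$-chain from $\alpha$ to $\alpha'$ projects via $\theta_*$ to a chain from $\beta$ to $\beta'$. The distance bound is preserved because $|\theta_*(\cdot)|=|\cdot|$ and, by a pointwise check, $\theta_*(\gcd(\alpha,\gamma))\le \gcd(\theta_*(\alpha),\theta_*(\gamma))$; hence $d(\theta_*(\alpha),\theta_*(\gamma))\le d(\alpha,\gamma)$. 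This gives a $c(T)$-chain in $B$, so $c(B)\le c(T)$.

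For $c(T)\le \max\{c(B),c(T,\theta)\}$, I would begin with $\alpha,\alpha'$ factorizations of the same element in $T$, project to $\beta=\theta_*(\alpha),\beta'=\theta_*(\alpha')$ in $B$, and choose a $c(B)$-chain $\beta=\beta^{(0)},\dots,\beta^{(k)}=\beta'$. Then I would lift this chain to $T$ inductively: assuming $\tilde\alpha^{(j)}$ has been constructed with $a^{\tilde\alpha^{(j)}}=a^{\alpha}$ and $\theta_*(\tilde\alpha^{(j)})=\beta^{(j)}$, write the $B$-step as $\beta^{(j)}=\sigma+\tau,\ \beta^{(j+1)}=\sigma+\tau'$ where $\sigma=\gcd(\beta^{(j)},\beta^{(j+1)})$; use the transfer property to pull the common part $\sigma$ out of $\tilde\alpha^{(j)}$, replace the complement (which has $|\cdot|\le c(B)$) by any lift of $\tau'$ over the same element of $T$, and thereby obtain $\tilde\alpha^{(j+1)}$ with $d(\tilde\alpha^{(j)},\tilde\alpha^{(j+1)})\le c(B)$. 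At the end $\theta_*(\tilde\alpha^{(k)})=\beta'=\theta_*(\alpha')$, so $\tilde\alpha^{(k)}$ and $\alpha'$ lie in the same fiber of $\theta_*$ and can be connected by a $c(T,\theta)$-chain, as this is exactly what $c(T,\theta)$ is designed to measure.

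The main obstacle is the lifting step in the second half: one needs to turn each elementary $B$-move into a $T$-move without inflating the distance. The key point is to isolate the common part of consecutive $B$-factorizations via gcd, apply the transfer property only to the complement, and verify that the resulting $T$-distance is at most the $B$-distance. Once this is set up correctly, the splitting into a $c(B)$-controlled intermonoid piece and a $c(T,\theta)$-controlled intrafiber piece gives the claimed bound.
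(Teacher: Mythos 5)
The paper offers no proof of this lemma: it is quoted verbatim from Geroldinger--Halter-Koch \cite[Theorem 3.2.5.5]{geroldinger_halter-koch}, so there is nothing internal to compare against. Your argument is correct and is essentially the standard proof from that reference. Both halves check out: for $c(B)\le c(T)$, the inequality $\theta_*(\gcd(\alpha,\gamma))\le\gcd(\theta_*(\alpha),\theta_*(\gamma))$ together with $|\theta_*(\cdot)|=|\cdot|$ does give $d(\theta_*(\alpha),\theta_*(\gamma))\le d(\alpha,\gamma)$, and for the upper bound the decomposition of each $B$-step into a gcd part $\sigma$ and complements $\tau,\tau'$ yields $d(\tilde\alpha^{(j)},\tilde\alpha^{(j+1)})\le\max\{|\tau|,|\tau'|\}=d(\beta^{(j)},\beta^{(j+1)})\le c(B)$, after which $c(T,\theta)$ handles the residual intrafiber discrepancy by definition. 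Two small remarks. First, you were right to insert the hypothesis $\theta^{-1}(1)=\{1\}$: the paper's abridged definition of transfer homomorphism states only the lifting axiom, but the full definition in the cited source includes this condition, and without it atoms of $T$ need not map to atoms of $B$, so the induced map $\theta_*$ would not even be defined. Second, a minor misattribution: splitting $\tilde\alpha^{(j)}=\rho+\pi$ with $\theta_*(\rho)=\sigma$ is a purely combinatorial regrouping in $\mathbb{N}_0^{\mathcal{A}(T)}$ and needs no transfer axiom; the lifting property is genuinely used only when replacing $\pi$ by a lift $\pi'$ of $\tau'$ with $a^{\pi'}=a^{\pi}$. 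Neither point affects the validity of the proof.
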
 

The aim of this section is to refine Lemma~\ref{lemma:catenary_transfer} for the transfer homomorphism 
$\mathcal{B}(\underline{g})\to \mathcal{B}(\mathrm{supp}(\underline{G}))$ given by 
\eqref{eq:B(g)_transfer}. More precisely, it will be shown how one can get a generating system of the defining congruence of $\mathcal{B}(\underline{g})$ from a given generating system of the defining congruence of $\mathcal{B}(\mathrm{supp}(\underline{g}))$. 
This will be done in a more general setup. 

Assume  that $S$ is a (not necessarily connected) graded, reduced, affine monoid and denote by 
$\tilde S$ the monoid 
\begin{equation}\label{eq:tildeS} 
\tilde S=\{s[i]\mid s\in S, 0\le i\le |s|\} \mbox{ with multiplication }
s[i]\cdot t[j]=(s\cdot t)[i+j]. \end{equation}  
So $\tilde S$ is a submonoid of the direct product of $S$ and the additive monoid $\mathbb{N}_0$. 
Obviously the map $\tilde S\to S$, $s[i]\mapsto s$ is a transfer homomorphism, and   
\[\mathcal{A}(\tilde S)=\{a[i]\mid a\in\mathcal{A}(S),\ 0\le i\le |a|\}.\] 
This transfer homomorphism $\tilde S\to S$ induces a monoid homomorphism 
\[\kappa:\mathbb{N}_0^{\mathcal{A}(\tilde S)}\to \mathbb{N}_0^{\mathcal{A}(S)},\quad 
\lambda\mapsto \left(a\mapsto \sum_{i=0}^{|a|}\lambda(a[i])\right).\] 
Set  
\[\delta:\mathbb{N}_0^{\mathcal{A}(\tilde S)}\to \mathbb{N}_0,\quad 
\lambda\mapsto \sum_{a[i]\in\mathcal{A}(\tilde S)}i\lambda(a[i]).\] 
Note that for $\lambda,\mu\in \mathbb{N}_0^{\mathcal{A}(\tilde S)}$ we have 
\begin{equation}\label{eq:kappa_delta} 
x^{\lambda}\sim_{\tilde S}x^{\mu} \iff 
x^{\kappa(\lambda)}\sim_S x^{\kappa(\mu)}\mbox{ and }\delta(\lambda)=\delta(\mu). 
\end{equation} 
Also we have 
$\delta(\lambda)\le |a^{\kappa(\lambda)}|= |a^{\kappa(\mu)}|$. 

\begin{lemma}\label{lemma:quadratic} 
Assume that $\alpha,\beta\in\mn_0^{\mathcal{A}(\tilde S)}$ satisfy 
\[\kappa(\alpha)=\kappa(\beta) \quad \mbox{ and }
\delta(\alpha)=\delta(\beta).\]
Then $x^{\alpha}\sim a^{\beta}$ with respect to the semigroup congruence $\sim$ on the free monoid 
$\tilde M=\{x^{\alpha}\mid \alpha\in \mathbb{N}_0^{\mathcal{A}(\tilde S)}\}$ 
generated by  
\[\{(x_{a[k]}x_{b[l]}, x_{a[k+1]}x_{b[l-1]})\mid  
 a,b\in\mathcal{A}(S),\ 0\le k\le |a|-1,\ 
1\le l\le |b|\}.\] 
\end{lemma}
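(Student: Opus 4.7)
The plan is to interpret $\alpha$ and $\beta$ as two configurations of ``colored tokens'' on the integer line: a token of color $a\in\mathcal{A}(S)$ occupies an integer position in $\{0,1,\dots,|a|\}$. The hypothesis $\kappa(\alpha)=\kappa(\beta)$ says the two configurations carry the same multiset of colors, while $\delta(\alpha)=\delta(\beta)$ says they have the same sum of positions. Each generator of the congruence in the statement implements a one-unit shift: it pushes one token up by $1$ and simultaneously pushes another token down by $1$, preserving both the color multiset and the total position.

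First I would fix an arbitrary color-respecting bijection between the $\alpha$-tokens and the $\beta$-tokens. Concretely, I enumerate the atom-copies of $\alpha$ as $(a_1,i_1),\dots,(a_N,i_N)$ and the matched atom-copies of $\beta$ as $(a_1,j_1),\dots,(a_N,j_N)$; such a bijection exists because $\kappa(\alpha)(a)=\kappa(\beta)(a)$ for every $a\in\mathcal{A}(S)$. The hypothesis $\delta(\alpha)=\delta(\beta)$ becomes $\sum_{p=1}^N(j_p-i_p)=0$. I then argue by induction on the total displacement
\[D(\alpha,\beta):=\sum_{p=1}^N |i_p-j_p|.\]
If $D=0$ then $\alpha=\beta$ and the claim is trivial. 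If $D>0$, the vanishing signed sum forces indices $p\ne q$ with $i_p<j_p$ and $i_q>j_q$; the inequalities $0\le i_p\le i_p+1\le j_p\le |a_p|$ and $0\le j_q\le i_q-1\le i_q\le |a_q|$ then show that
\[(x_{a_p[i_p]}x_{a_q[i_q]},\ x_{a_p[i_p+1]}x_{a_q[i_q-1]})\]
is a generator from the list in the statement, applicable to $x^\alpha$. (In the degenerate subcase $a_p=a_q,\ i_p=i_q$, applicability is ensured because $\alpha(a_p[i_p])\ge 2$ there.) Performing the move produces $\alpha'$ with $x^\alpha\sim x^{\alpha'}$ and $D(\alpha',\beta)=D(\alpha,\beta)-2$, and induction closes the argument.

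The one nontrivial point, and the main obstacle, is verifying that a shift never leaves the legal index range $\{0,1,\dots,|a|\}$ for the atom $a$ involved. This comes for free from the construction: the ``up-mover'' $p$ is shifted toward a target $j_p\in[0,|a_p|]$ that it has not yet reached, and the ``down-mover'' $q$ is shifted toward a target $j_q\in[0,|a_q|]$ that it has not yet reached, so a single unit shift in the appropriate direction keeps each token inside its range. With this range check in place, the induction is routine and yields an explicit chain of generating moves transforming $x^\alpha$ into $x^\beta$.
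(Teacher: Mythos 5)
Your proof is correct and follows essentially the same elementary strategy as the paper's: both transform $x^{\alpha}$ into $x^{\beta}$ by repeatedly applying the quadratic shift generators, with the key point being that equality of $\kappa$ and $\delta$ always supplies an ``up-mover'' and a ``down-mover'' whose shifts stay in range. The paper organizes this as a two-case induction (cancel common variables, then walk one token stepwise toward its target), while you fix a colour-respecting matching and induct on the total displacement $\sum_p |i_p - j_p|$, which packages the same idea, including the range and applicability checks, a bit more systematically.
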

\begin{proof}  Case I: $x^{\alpha}$ and $x^{\beta}$ involve a common variable $x_{a[i]}$.  Then 
$x^{\alpha}=x_{a[i]}x^{\alpha'}$, $x^{\beta}=x_{a[i]}x^{\beta'}$, and 
the assumptions on the pair $(\alpha,\beta)$ in the lemma hold for the pair 
$(\alpha',\beta')$.  By induction on $\displaystyle \sum_{a[i]\in\mathcal{A}(\tilde S)}\alpha(a[i])$ we may conclude that $x^{\alpha'}\sim x^{\beta'}$, implying in turn that $x^{\alpha}\sim x^{\beta}$. 

Case II: $x^{\alpha}$ and $x^{\beta}$ are not divisible by a common variable. Take a variable $x_{a[i]}$ dividing 
$x^{\alpha}$. Then $\kappa(\alpha)=\kappa(\beta)$ implies that  
$x^{\beta}$ is divisible by $x_{a[k]}$ for some $k\neq i$. By symmetry it is sufficient to deal with the case $i>k$. By the assumptions on $\alpha,\beta$ there must exist an atom  $b\in\mathcal{A}(S)$ and integers $j<l$ such that $x_{a[i]}x_{b[j]}$ divides $x^{\alpha}$ and $x_{a[k]}x_{b[l]}$ divides $T^{\beta}$. We have 
$x^{\alpha}=x_{a[i]}x_{b[j]}x^{\gamma}\sim x_{a[i-1]}x_{b[j+1]}x^{\gamma}=x^{\alpha'}$. 
The conditions of the lemma on the pair $(\alpha,\beta)$ hold also for the pair $(\alpha',\beta)$.  If $i-1=k$, then $x^{\alpha'}$ and $x^{\beta}$  are divisible by a common variable, and we are back in Case I. 
Otherwise  similarly to the above process we have 
$x^{\alpha'}\sim x^{\alpha''}$ where $x^{\alpha''}$ is divisible by $x_{a[i-2]}$ and 
$\kappa(\alpha'')=\kappa(\beta)$ and 
$\delta(\alpha'')=\delta(\beta)$. 
After finitely many such steps we get back to Case I.   
\end{proof} 

\begin{remark} 
Lemma~\ref{lemma:quadratic} says that for the transfer homomorphism 
$\theta:\tilde S\to S$, $s[i]\mapsto s$ we have $c(\tilde S,\theta)\le 2$ in 
Lemma~\ref{lemma:catenary_transfer}. 
\end{remark} 

\begin{theorem}\label{thm:tildeS_presentation} 
Suppose that the congruence $\sim_S$ is generated by 
$\{(x^{\lambda}, x^{\mu})\mid (\lambda,\mu)\in\Lambda\}$ for some 
$\Lambda\subset \mathbb{N}_0^{\mathcal{A}(S)}\times \mathbb{N}_0^{\mathcal{A}(S)}$. 
For each $\lambda\in \mathbb{N}_0^{\mathcal{A}(\tilde S)}$ such that $(\lambda,\mu)\in \Lambda$ or $(\mu,\lambda)\in \Lambda$ for some $\mu$, and for each 
$0\le i\le \delta(\lambda)$ choose 
$\lambda[i]\in \mathbb{N}_0^{\mathcal{A}(\tilde S)}$ 
such that $\kappa(\lambda[i])=\lambda$ and $\delta(\lambda[i])=i$
(this is clearly possible). 
Then the congruence $\sim_{\tilde S}$ is generated by 
\begin{align*}\{(x^{\lambda[i]}, x^{\mu[i]}),\ 
(x_{a[k]}x_{b[l]}, x_{a[k+1]}x_{b[l-1]}) 
\mid 
(\lambda,\mu)\in \Lambda, \ 0\le i\le |a^{\lambda}|, \\ 
a,b\in\mathcal{A}(S),\ 0\le k\le |a|-1,\ 
1\le l\le |b|\}.
\end{align*} 
\end{theorem}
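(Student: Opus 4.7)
The plan is to establish two inclusions. Denote by $\approx$ the congruence on $\tilde M$ generated by the two families of pairs listed in the theorem. The inclusion $\approx\ \subseteq\ \sim_{\tilde S}$ is immediate: for each $(\lambda,\mu)\in\Lambda$ and each admissible $i$, the chosen lifts satisfy $\kappa(\lambda[i])=\lambda$, $\kappa(\mu[i])=\mu$, and $\delta(\lambda[i])=i=\delta(\mu[i])$, so \eqref{eq:kappa_delta} combined with $x^\lambda\sim_S x^\mu$ gives $x^{\lambda[i]}\sim_{\tilde S}x^{\mu[i]}$; the quadratic pairs lie in $\sim_{\tilde S}$ because $a[k]\cdot b[l]=(ab)[k+l]=a[k+1]\cdot b[l-1]$ in $\tilde S$. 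The substance of the proof is the reverse inclusion.

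So suppose $x^\alpha\sim_{\tilde S}x^\beta$. By \eqref{eq:kappa_delta}, $x^{\kappa(\alpha)}\sim_S x^{\kappa(\beta)}$ and $\delta(\alpha)=\delta(\beta)$. Since $\Lambda$ generates $\sim_S$, there is an elementary chain $\kappa(\alpha)=\rho_0,\rho_1,\dots,\rho_n=\kappa(\beta)$ in $\mathbb{N}_0^{\mathcal{A}(S)}$ with $\rho_j=\lambda_j+\sigma_j$ and $\rho_{j+1}=\mu_j+\sigma_j$ for some $(\lambda_j,\mu_j)\in\Lambda\cup\Lambda^{-1}$. My strategy is to lift this $S$-chain to a $\approx$-chain $\alpha=\tilde\rho_0,\tilde\rho_1,\dots,\tilde\rho_n$ in $\mathbb{N}_0^{\mathcal{A}(\tilde S)}$ satisfying $\kappa(\tilde\rho_j)=\rho_j$ and $\delta(\tilde\rho_j)=\delta(\alpha)$ for every $j$. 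Once this is achieved, the endpoint $\tilde\rho_n$ shares both $\kappa$-value and $\delta$-value with $\beta$, whence Lemma~\ref{lemma:quadratic} — whose quadratic generators are part of our generating set — yields $x^{\tilde\rho_n}\approx x^\beta$ and thus $x^\alpha\approx x^\beta$.

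For the inductive step, given $\tilde\rho_j$, I pick an integer $i$ in
\[\bigl[\max\{0,\ \delta(\tilde\rho_j)-|a^{\sigma_j}|\},\ \min\{|a^{\lambda_j}|,\ \delta(\tilde\rho_j)\}\bigr],\]
which is non-empty because $\delta(\tilde\rho_j)=\delta(\alpha)\le|a^{\rho_j}|=|a^{\lambda_j}|+|a^{\sigma_j}|$, and any $\tau\in\mathbb{N}_0^{\mathcal{A}(\tilde S)}$ with $\kappa(\tau)=\sigma_j$ and $\delta(\tau)=\delta(\tilde\rho_j)-i$. Then $\lambda_j[i]+\tau$ and $\tilde\rho_j$ share both $\kappa$- and $\delta$-values, so Lemma~\ref{lemma:quadratic} gives $x^{\tilde\rho_j}\approx x^{\lambda_j[i]+\tau}$ using only the quadratic generators; multiplying the pair $(x^{\lambda_j[i]},x^{\mu_j[i]})$ (read in either direction) by $x^\tau$ then transports this to $x^{\mu_j[i]+\tau}$, which I define to be $\tilde\rho_{j+1}$. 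Both $\kappa$ (now $\rho_{j+1}$) and $\delta$ are preserved, closing the induction. The only delicate point — and the main obstacle to isolate — is checking that a valid index $i$ exists at every stage; this reduces to the elementary inequality $\delta\le$ total degree noted right after \eqref{eq:kappa_delta}, after which Lemma~\ref{lemma:quadratic} absorbs all of the ambiguity in simultaneously lifting $\kappa$- and $\delta$-values into routine quadratic adjustments.
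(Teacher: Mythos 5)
Your proposal is correct and follows essentially the same route as the paper: decompose the relation $x^{\kappa(\alpha)}\sim_S x^{\kappa(\beta)}$ into an elementary $\Lambda$-chain, at each step choose a splitting index in exactly the interval you describe (non-empty by $\delta(\alpha)\le|a^{\kappa(\alpha)}|=|a^{\lambda_j}|+|a^{\sigma_j}|$), and use Lemma~\ref{lemma:quadratic} to pass between any two lifts sharing the same $\kappa$- and $\delta$-values. The only cosmetic difference is that the paper threads each step through the canonical lift $x^{(\lambda_j+\gamma_j)[d]}$ while you connect consecutive lifted terms directly; both reductions rest on the same two ingredients.
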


\begin{proof} The pairs given in the statement do belong to the congruence 
$\sim_{\tilde S}$. 
Denote by $\sim$ the semigroup congruence on 
$\tilde M=\{x^{\alpha}\mid \alpha\in \mathbb{N}_0^{\mathcal{A}(\tilde S)}\}$ generated by them. It is sufficient to show that if $x^{\alpha}\sim_{\tilde S} x^{\beta}$ 
for some $\alpha,\beta\in \mathbb{N}_0^{\mathcal{A}(\tilde S)}$, then 
$x^{\alpha}\sim x^{\beta}$. By \eqref{eq:kappa_delta}  we have 
$x^{\kappa(\alpha)}\sim_S x^{\kappa(\beta)}$  and $\delta(\alpha)=\delta(\beta)$. 
Therefore there exists a sequence 
$(\lambda_j,\mu_j) \in  \mathbb{N}_0^{\mathcal{A}(S)} \times \mathbb{N}_0^{\mathcal{A}(S)}$ and $\gamma_j\in\mn_0^{\mathcal{A}(S)}$  
$(j=1,\dots,s)$ such that $(\lambda_j,\mu_j)\in \Lambda$ or $(\mu_j,\lambda_j)\in\Lambda$ 
(implying in particular that $|a^{\lambda_j}|=|a^{\mu_j}|$), 
$\lambda_1+\gamma_1=\kappa(\alpha)$,  $\mu_s+\gamma_s=\kappa(\beta)$, and $\mu_j+\gamma_j=\lambda_{j+1}+\gamma_{j+1}$ for 
$j=1,\dots,s-1$. 
Set $d:=\delta(\alpha)=\delta(\beta)$. 
For each $j=1,\dots,s$ choose a non-negative integer $k_j$ with 
\[d-|a^{\gamma_j}|
\le k_j\le 
|a^{\lambda_j}|. \]
This is possible, because 
\[d\le 
|a^{\kappa(\alpha)}|=|a^{\lambda_j+\gamma_j}|
=|a^{\lambda_j}|+|a^{\gamma_j}|.\]
Taking into account Lemma~\ref{lemma:quadratic} we get 
\begin{align*}x^{\alpha}\sim  x^{\kappa(\alpha)[d]}=x^{(\lambda_1+\gamma_1)[d]}\sim x^{\lambda_1[k_1]}x^{\gamma_1[d-k_1]}\sim x^{\mu_1[k_1]}x^{\gamma_1[d-k_1]}\sim 
\\  x^{(\mu_1+\gamma_1)[d]}= x^{(\lambda_2+\gamma_2)[d]}\sim 
x^{\lambda_2[k_2]}x^{\gamma_2[d-k_2]}\sim x^{\mu_2[k_2]}x^{\gamma_2[d-k_2]}\sim \\ 
x^{(\mu_2+\gamma_2)[d]} = x^{(\lambda_3+\gamma_3)[d]}\sim \cdots 
\sim x^{(\mu_s+\gamma_s)[d]}= x^{\kappa(\beta)[d]}\sim x^{\beta}.
 \end{align*} 
\end{proof} 

\begin{remark} \label{remark:hibi}
Statements related to Theorem~\ref{thm:tildeS_presentation} are proved in \cite{ohsugi_hibi}, studying toric ideals associated to nested configurations (see also 
\cite{shibuta} for some generalization).  The construction of $\tilde S$ from $S$ (see 
\eqref{eq:tildeS}) can be seen as a special case of the construction of nested configurations. 
In particular, when $S$ is a submonoid of $\mathbb{N}_0^d$ generated by finitely many elements $\alpha^{(1)},\dots,\alpha^{(m)}$ such that there exists a $v\in\mathbb{R}^d$ with $\sum_{j=1}^d\alpha^{(i)}_jv_j=1$ for all 
$i=1,\dots,m$ (this implies that $S$ can be graded in such a way that each generator has degree $1$), the results of \cite{ohsugi_hibi} apply to the binomial ideal associated 
to the monoid $\tilde S$ and yield a system of generators similar to the one given by 
Theorem~\ref{thm:tildeS_presentation}. 
\end{remark} 

The monoid $\mathcal{B}(\underline{g})$ can be obtained from the monoid 
$\mathcal{B}(\mathrm{supp}(\underline{g}))$ by a repeated application of the construction \eqref{eq:tildeS}, and therefore 
Theorem~\ref{thm:tildeS_presentation} can be applied to relate the catenary degree of 
$\mathcal{B}(\underline{g})$ to the catenary degree of $\mathcal{B}(\mathrm{supp}(\underline{g}))$.  
Indeed, start with an $m$-tuple $\underline{g}=(g_1,\dots,g_m)\in G^m$ of 
not necessarily distinct elements in $G$, and denote by 
$\tilde{\underline{g}}$ the $m+1$-tuple 
$(g_1,\dots,g_m,g_m)$ obtained from $\underline{g}$ by repeating the $m^{\mathrm{th}}$ component. Consider the grading on the monoid $\mathcal{B}(\underline{g})$ given by 
\[\mathcal{B}(\underline{g})_d=\{\alpha\in \mathcal{B}(\underline{g})\subseteq \mathbb{N}_0^m\mid 
\alpha_m=d\}. \]

\begin{proposition}\label{prop:B(tilde-g)} 
We have $\mathcal{B}(\tilde{\underline{g}})\cong \tilde S$, where 
$S=\mathcal{B}(\underline{g})$ is endowed with the above grading and $\tilde S$ is 
defined as in \eqref{eq:tildeS}.  
\end{proposition}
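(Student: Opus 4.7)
My plan is to write down an explicit isomorphism between $\mathcal{B}(\tilde{\underline g})$ and $\tilde S$ and check the monoid axioms directly. Concretely, an element of $\mathcal{B}(\tilde{\underline g})$ is a tuple $\beta=(\beta_1,\dots,\beta_m,\beta_{m+1})\in\mathbb{N}_0^{m+1}$ with $\prod_{i=1}^m g_i^{\beta_i}\cdot g_m^{\beta_{m+1}}=1$. I would attach to $\beta$ the element
\[\alpha(\beta):=(\beta_1,\dots,\beta_{m-1},\beta_m+\beta_{m+1})\in\mathbb{N}_0^m,\]
and define
\[\varphi:\mathcal{B}(\tilde{\underline g})\to\tilde S,\qquad \varphi(\beta):=\alpha(\beta)[\beta_{m+1}].\]

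The first step is to verify that this lands in $\tilde S$. The product-one condition on $\beta$ gives $\prod_{i=1}^{m-1}g_i^{\beta_i}\cdot g_m^{\beta_m+\beta_{m+1}}=1$, so $\alpha(\beta)\in S=\mathcal{B}(\underline g)$. By definition of the grading on $S$, we have $|\alpha(\beta)|=\alpha(\beta)_m=\beta_m+\beta_{m+1}$, whence $0\le\beta_{m+1}\le|\alpha(\beta)|$, confirming that $\alpha(\beta)[\beta_{m+1}]$ is a legal element of $\tilde S$ in the sense of \eqref{eq:tildeS}.

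Next I would check that $\varphi$ is a monoid homomorphism. For $\beta,\beta'\in\mathcal{B}(\tilde{\underline g})$ we have $\alpha(\beta+\beta')=\alpha(\beta)+\alpha(\beta')$ and the last coordinates satisfy $(\beta+\beta')_{m+1}=\beta_{m+1}+\beta'_{m+1}$; comparing with the rule $s[i]\cdot t[j]=(s\cdot t)[i+j]$ from \eqref{eq:tildeS} yields $\varphi(\beta+\beta')=\varphi(\beta)\cdot\varphi(\beta')$. The identity clearly maps to $1[0]$.

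Finally, bijectivity is visible from the explicit inverse: given $\alpha[i]\in\tilde S$ with $\alpha\in S$ and $0\le i\le|\alpha|=\alpha_m$, set
\[\psi(\alpha[i]):=(\alpha_1,\dots,\alpha_{m-1},\alpha_m-i,i)\in\mathbb{N}_0^{m+1}.\]
Then $\psi(\alpha[i])$ satisfies the product-one condition for $\tilde{\underline g}$ because $g_m^{\alpha_m-i}\cdot g_m^{i}=g_m^{\alpha_m}$, and $\varphi\circ\psi$, $\psi\circ\varphi$ are identities by inspection. Honestly no step is a real obstacle; the only thing to be careful about is keeping straight the distinction between the $m$-th coordinate (which measures the grading on $S$) and the new auxiliary coordinate encoding $i$, and to check that the constraint $0\le i\le|\alpha|$ in \eqref{eq:tildeS} corresponds exactly to $\beta_m,\beta_{m+1}\ge 0$.
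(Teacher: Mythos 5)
Your proof is correct and follows essentially the same route as the paper: the paper exhibits exactly the map you call $\psi$, namely $\alpha[i]\mapsto(\alpha_1,\dots,\alpha_{m-1},\alpha_m-i,i)$, as the isomorphism $\tilde S\to\mathcal{B}(\tilde{\underline g})$, and you have simply written down its inverse together with the routine verifications the paper leaves implicit.
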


\begin{proof} A general element of $\tilde S$ is of the form $\alpha[i]$ where 
$\alpha=(\alpha_1,\dots,\alpha_m)\in \mathcal{B}(\underline{g})$ and $0\le i\le \alpha_m$. 
The map $\tilde S\to \mathcal{B}(\tilde{\underline{g}})$ sending 
$\alpha[i]\in \tilde S$ to  $(\alpha_1,\dots,\alpha_{m-1},\alpha_m-i,i)$ is an isomorphism 
between the monoids $\tilde S$ and $\mathcal{B}(\tilde{\underline{g}})$. 
\end{proof}

Proposition~\ref{prop:catenary}, Theorem~\ref{thm:tildeS_presentation}, and 
Proposition~\ref{prop:B(tilde-g)} imply the following: 

\begin{corollary}\label{cor:B(chi-tilde)} 
We have $c(\mathcal{B}(\underline{g}))=c(\mathcal{B}(\mathrm{supp}(\underline{g}))$, unless 
$\mathcal{B}(\mathrm{supp}(\underline{g}))$ is a free monoid and the components $g_1,\dots,g_m$ of $\underline{g}$ are not all distinct. 
In the latter case we have $c(\mathcal{B}(\mathrm{supp}(\underline{g})))=0$ whereas 
$c(\mathcal{B}(\underline{g}))=2$.  
\end{corollary}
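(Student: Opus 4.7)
The plan is to reduce the statement to an iterated application of the tilde construction \eqref{eq:tildeS} and to bound the catenary degree at each step using Theorem~\ref{thm:tildeS_presentation} together with Proposition~\ref{prop:catenary}.

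First, if all components of $\underline{g}$ are distinct, the transfer homomorphism \eqref{eq:B(g)_transfer} is in fact an isomorphism, so $\mathcal{B}(\underline{g}) \cong \mathcal{B}(\mathrm{supp}(\underline{g}))$ and the claimed equality of catenary degrees is immediate. Otherwise, iterating Proposition~\ref{prop:B(tilde-g)} I can place $\mathcal{B}(\underline{g})$ at the end of a finite chain $S^{(0)} = \mathcal{B}(\mathrm{supp}(\underline{g})), S^{(1)}, \ldots, S^{(N)} = \mathcal{B}(\underline{g})$ in which each link $S^{(k)} \leadsto S^{(k+1)}$ is one application of the tilde construction \eqref{eq:tildeS}, corresponding to splitting one component of the current tuple into two equal copies (after an appropriate reordering that moves that component to the last position).

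For a single link $S \leadsto \tilde S$ I would establish the two-sided bound $c(S) \le c(\tilde S) \le \max\{c(S), 2\}$. The lower bound is Lemma~\ref{lemma:catenary_transfer} applied to the evident transfer homomorphism $\tilde S \to S$, $s[i] \mapsto s$. For the upper bound, Proposition~\ref{prop:catenary} provides a generating set $\Lambda$ of $\sim_S$ all of whose members $(x^\lambda, x^\mu)$ satisfy $|\lambda|, |\mu| \le c(S)$; feeding this $\Lambda$ into Theorem~\ref{thm:tildeS_presentation} produces a generating set of $\sim_{\tilde S}$ that consists of \emph{lifted} pairs $(x^{\lambda[i]}, x^{\mu[i]})$ (whose total degrees equal the original $|\lambda|, |\mu|$, since $\kappa$ is degree preserving) together with \emph{quadratic} pairs of degree $2$. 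Applying Proposition~\ref{prop:catenary} once more gives $c(\tilde S) \le \max\{c(S), 2\}$.

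Now I iterate along the chain. If $c(\mathcal{B}(\mathrm{supp}(\underline{g}))) \ge 2$, the bound reads $c(S^{(k)}) \le c(S^{(k+1)}) \le c(S^{(k)})$ at every step, giving $c(\mathcal{B}(\underline{g})) = c(\mathcal{B}(\mathrm{supp}(\underline{g})))$. If instead $c(\mathcal{B}(\mathrm{supp}(\underline{g}))) = 0$, then inductively $c(S^{(k)}) \le 2$ throughout, and since by the remark following Definition~\ref{def:cat_degree} the catenary degree is never equal to $1$, one has $c(\mathcal{B}(\underline{g})) \in \{0,2\}$. The main obstacle, and the step I would carry out last, is to confirm that under the hypothesis that the components of $\underline{g}$ are not all distinct the value is exactly $2$: I would do this by exhibiting a concrete nontrivial quadratic relation of the form $x_{a[k]}x_{b[l]} = x_{a[k+1]}x_{b[l-1]}$ at the first link of the chain where the tilde construction introduces such a relation, using the structure of the atoms of $\mathcal{B}(\mathrm{supp}(\underline{g}))$ together with the assumption of a repeated component.
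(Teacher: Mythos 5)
Your reduction is exactly the paper's intended route: write $\mathcal{B}(\underline{g})$ as the end of a chain of tilde constructions via Proposition~\ref{prop:B(tilde-g)}, and at each link combine the lower bound $c(S)\le c(\tilde S)$ from Lemma~\ref{lemma:catenary_transfer} with the upper bound $c(\tilde S)\le\max\{c(S),2\}$ obtained by feeding a $c(S)$-bounded generating set of $\sim_S$ (Proposition~\ref{prop:catenary}) into Theorem~\ref{thm:tildeS_presentation} (the lifts $\lambda[i]$ indeed have $|\lambda[i]|=|\lambda|$ since $\kappa$ preserves length). Everything through the conclusion $c(\mathcal{B}(\underline{g}))\in\{0,2\}$ in the exceptional case is correct and is what the paper's one-line proof is pointing at.

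The step you defer to the end, however, is not just unfinished: it cannot be carried out, because a repeated component does not force a nontrivial quadratic relation. Take $G=\mathbb{Z}$ and $\underline{g}=(1,1,-1)$. Then $\mathcal{B}(\mathrm{supp}(\underline{g}))=\mathcal{B}(\{1,-1\})\cong\mathbb{N}_0$ is free and the components of $\underline{g}$ are not all distinct, yet $\mathcal{B}(\underline{g})=\{\alpha\in\mathbb{N}_0^3\mid \alpha_3=\alpha_1+\alpha_2\}\cong\mathbb{N}_0^2$ is free, so $c(\mathcal{B}(\underline{g}))=0$, not $2$ (even simpler: $\underline{g}=(1_G,1_G)$). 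The point is that if $S$ is free and exactly one atom has positive degree, that degree being $1$, then every quadratic pair produced by Theorem~\ref{thm:tildeS_presentation} is of the trivial form $(x_{a[0]}x_{a[1]},x_{a[1]}x_{a[0]})$ and $\tilde S$ is again free; in general, for free $S$ the monoid $\tilde S$ is free if and only if $\sum_{a\in\mathcal{A}(S)}|a|\le 1$. So what your argument actually establishes is: $c(\mathcal{B}(\underline{g}))=c(\mathcal{B}(\mathrm{supp}(\underline{g})))$ unless $\mathcal{B}(\mathrm{supp}(\underline{g}))$ is free while $\mathcal{B}(\underline{g})$ is not, in which case the two values are $0$ and $2$. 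To get a statement in terms of $\underline{g}$ alone you would need the freeness criterion just mentioned at each link of the chain, and the exceptional case of the corollary as printed must be restated accordingly; your plan of "exhibiting a concrete nontrivial quadratic relation" from the mere existence of a repetition will fail on the examples above.
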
 

\begin{remark}\label{remark:G_0-g}
Being a finitely generated reduced Krull monoid, $\mathcal{B}(\underline{g})\cong 
\mathcal{B}(H_0)$ for some finite subset $H_0$ in an abelian group $H$ (different from $G$ in general) by \cite[Theorem 2.7.14]{geroldinger_halter-koch} also when $g_1,\dots,g_m$ are not all distinct. 
However, the representation of our  monoid in the form $\mathcal{B}(\underline{g})$ is fundamental for our discussions. 

An easy direct proof of the isomorphism $\mathcal{B}(\underline{g})\cong 
\mathcal{B}(H_0)$ can be derived from the following observation. 
Take $\underline{g}\in G^m$, and suppose that $g_{m-1}=g_m$. 
Consider the group $H:=G\times \mathbb{Z}$, and the sequence 
\[\underline{h}:=((g_1,0),\dots,(g_{m-1},0),(g_{m-1},1), (0,-1))\in H^{m+1}.\]  
It is easy to see that we have a monoid isomorphism 
$\mathcal{B}(\underline{g})\cong \mathcal{B}(\underline{h})$. 
Now observe that there are less component repetitions in the sequence $\underline{h}$ then the number of component repetitions in $\underline{g}$. Note that the "price" for this manipulation was that we had to extend the group $G$. 
\end{remark}


\section{Gr\"obner bases} \label{sec:groebner}

In this section we give a Gr\"obner basis variant of the results of  
Section~\ref{sec:repetition}. 
Fix an admissible total order $\prec$ on the finitely generated free 
multiplicative monoid $M$; that is, $\prec$ is a total order such that for $x,y,z\in M$ with $x\prec y$ we have $xz\prec yz$,  and $1\prec x$ for each $x\in M\setminus \{1\}$. The latter condition ensures that 
the order $\prec$ is artinian, so any non-empty subset of $M$ contains a unique minimal element. Note also that $\prec$ is a term order of the polynomial ring $\mathbb{F}[M]$ (where $\mathbb{F}$ is a field) in the sense of Gr\"obner basis theory. 

\begin{definition}\label{def:groebner} A finite set 
$\Lambda\subset M\times M$ is a \emph{Gr\"obner system of the semigroup congruence $\sim$ on $M$} if the following conditions hold: 
\begin{itemize} 
\item[(i)] $x\sim y$ and $y\prec x$ for each $(x,y)\in \Lambda$; 
\item[(ii)]  $z\in M$ is the minimal element in its congruence class with respect to $\sim$ if 
there is no $(x,y)\in\Lambda$  such that $x$ divides $z$ in $M$.  
\end{itemize} 
\end{definition}  

\begin{proposition}\label{prop:groebner-basic} 
\begin{itemize} 
\item[(i)] If $\Lambda\subset M\times M$ is a Gr\"obner system of the semigroup congruence $\sim$ then $\Lambda$ generates $\sim$. 
\item[(ii)]  Every semigroup congruence $\sim$ on $M$ has a  Gr\"obner system. 
\item[(iii)] $\Lambda\subset M\times M$ is a Gr\"obner system of $\sim$ if and only if 
$\{x-y \mid (x,y)\in \Lambda\}$ is a Gr\"obner basis (satisfying $y\prec x$ for each of its elements $x-y$) of the ideal 
$\ker(\pi_{\mathbb{F}})$, 
where $\mathbb{F}$ is a field and $\pi_{\mathbb{F}}:\mathbb{F}[M]\to \mathbb{F}[M/\sim]$ is induced by the natural surjection $M\to M/\sim$. 
\end{itemize} 
\end{proposition}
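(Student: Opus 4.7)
The plan is to address the three parts in sequence; parts (i) and (iii) become formal rewriting / Gr\"obner-basis translations once (ii) is in hand, and (ii) itself reduces to a divisor-closedness observation combined with Dickson's lemma.

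For part (i), regard $\Lambda$ as a rewrite system on $M$: declare $u \rightsquigarrow u'$ whenever $u = xz$ and $u' = yz$ for some $(x,y)\in\Lambda$ and $z\in M$. Each rewrite step preserves the $\sim$-class (Definition~\ref{def:groebner}(i)) and strictly decreases the element in $\prec$, using multiplicative compatibility of $\prec$ together with $y \prec x$. Since $\prec$ is artinian the rewriting terminates, and by Definition~\ref{def:groebner}(ii) the terminal elements are precisely the $\prec$-minima of the $\sim$-classes. Any two $\sim$-equivalent elements therefore rewrite to the same minimum, showing that the congruence generated by $\Lambda$ is $\sim$.

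For part (ii), let $N \subseteq M$ denote the set of $\prec$-minima of the $\sim$-classes. The key structural lemma is that $N$ is closed under taking divisors in $M$: if $u = xz \in N$ and $x' \sim x$ with $x' \prec x$ existed, then $x'z \sim u$ and $x'z \prec u$ would contradict the minimality of $u$. Consequently $M \setminus N$ is a divisibility filter, and by Dickson's lemma it has only finitely many divisibility-minimal elements $x_1,\dots,x_n$. Taking $y_i$ to be the $\prec$-minimum of the $\sim$-class of $x_i$ and setting $\Lambda := \{(x_i, y_i) : 1 \le i \le n\}$ produces a finite candidate; condition (i) of Definition~\ref{def:groebner} is automatic, while condition (ii) holds because $z \in N$ iff no $x_i$ divides $z$ (the ``if'' direction uses that every element of $M \setminus N$ lies above some generator of the filter, while the ``only if'' direction uses the divisor-closure of $N$).

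For part (iii), observe first that $\mathrm{in}_\prec(x - y) = x$ whenever $y \prec x$, and that the binomials $\{x_\lambda - y_\lambda\}$ lie in $\ker(\pi_\mathbb{F})$ exactly when $x_\lambda \sim y_\lambda$, matching Definition~\ref{def:groebner}(i). By the standard characterization of Gr\"obner bases via standard monomials, these binomials form a Gr\"obner basis of $\ker(\pi_\mathbb{F})$ iff the monomials not divisible by any $x_\lambda$ form an $\mathbb{F}$-basis of $\mathbb{F}[M]/\ker(\pi_\mathbb{F}) \cong \mathbb{F}[M/\sim]$, equivalently, constitute a system of representatives of $\sim$-classes. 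Applying the rewriting argument from (i), any such representative admits no further reduction and must therefore be the $\prec$-minimum of its class; conversely, the set $N$ of $\prec$-minima is automatically a system of representatives. Hence the Gr\"obner basis condition is equivalent to Definition~\ref{def:groebner}(ii). The main obstacle throughout is (ii), where the divisor-closure of $N$ is what makes Dickson's lemma applicable and delivers the required \emph{finiteness} of $\Lambda$.
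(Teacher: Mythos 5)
Your proofs of (i) and (iii) follow essentially the same route as the paper: (i) is the same rewriting/termination argument using the artinian property of $\prec$, and (iii) rests on the same Macaulay-type comparison between the monomials outside the ideal generated by $\{\mathrm{in}_\prec(x-y)\}$ and the standard monomials of $\ker(\pi_{\mathbb{F}})$, identifying both with the set of $\prec$-minima of the $\sim$-classes (the paper phrases this as the equality $J=K$ of two monomial ideals; your phrasing via ``systems of representatives'' is the same argument). The genuine divergence is in (ii). The paper proves (ii) by citing the known fact that every binomial ideal admits a Gr\"obner basis consisting of binomials and then invoking the equivalence (iii), so its proof of (ii) passes through the semigroup algebra $\mathbb{F}[M]$ and an external commutative-algebra result. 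You instead give a direct, purely combinatorial construction: the set $N$ of $\prec$-minima is divisor-closed (by the multiplicativity of both $\sim$ and $\prec$), hence $M\setminus N$ is an upward-closed set in $\mathbb{N}_0^{\mathcal{A}(S)}$, Dickson's lemma supplies finitely many divisibility-minimal elements $x_1,\dots,x_n$, and pairing each $x_i$ with the minimum $y_i$ of its class yields a finite Gr\"obner system; both conditions of Definition~\ref{def:groebner} are verified correctly (condition (ii) is the Dickson direction, and $y_i\prec x_i$ holds because $x_i\notin N$). This argument is correct and self-contained: it keeps (ii) independent of (iii) and of any field, at the cost of redoing in semigroup language what the cited binomial-Gr\"obner-basis result already packages. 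Either route is valid; yours is the more elementary, the paper's the shorter.
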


\begin{proof} (i) Denote by $\sim_{\Lambda}$ the congruence generated by $\Lambda$. 
By assumption it is contained in $\sim$, since for each $(x,y)\in\Lambda$ we have $x\sim y$. To see the reverse inclusion it is sufficient to show that for any $z\in M$ we have  
$z\sim_{\Lambda}u$, where $u$ is the minimal element in the $\sim$-congruence class of $z$. 
If $z=u$, we are done. Otherwise $u\prec z$, hence by assumption there exists a pair 
$(x,y)\in \Lambda$ and $v\in M$ such that $z=xv$. Set $z_1=yv$. Then $z_1=yv\prec xv=z$ and 
$z\sim_{\Lambda} z_1$. 
If $z_1=u$, then we are done. Otherwise repeat the same step for $z_1$ instead of $z$   
(note that $z_1\sim z \sim u$). 
We obtain $z_2\in M$ with $z_2\prec z_1$ and $z_2\sim_{\Lambda}z_1$. If $z_2=u$ we are done, otherwise repeat the above step with $z_2$ instead of $z_1$. 
Since the order $\prec$ is artinian, in finitely many steps we must end up with 
$z\sim_{\Lambda}z_k=u$. 

(ii) It is well known that any binomial ideal has a  Gr\"obner basis consisting of binomials, see for example 
\cite[Lemma 8.2.17]{villareal}. Therefore the statement follows from (iii). 

(iii) Suppose $\{x-y \mid (x,y)\in \Lambda\}$ is a Gr\"obner basis of the ideal $\ker(\pi_{\mathbb{F}})$ (where $y\prec x$ for each $(x,y)\in \Lambda$). It follows that the initial ideal of $\ker(\pi_{\mathbb{F}})$ is generated by $L:=\{x\mid \exists y: (x,y)\in\Lambda\}$. Now take any $z\in M$ which is not minimal in its congruence class with respect to $\sim$. Then there is an $u\prec z$ such that $z\sim u$, 
so $z-u\in \ker(\pi_{\mathbb{F}})$ has initial term $z$. Therefore  there is an $x\in L$ such that $x$ divides $z$, so condition (ii) of Definition~\ref{def:groebner} holds for $\Lambda$ 
(it is obvious that condition (i) of Definition~\ref{def:groebner} holds for $\Lambda$). 

Conversely, assume that $\Lambda$ is a Gr\"obner system of $\sim$, and consider the subset $L:=\{x-y\mid (x,y)\in\Lambda\}$ in $\mathbb{F}[M]$.  Denote by $J$ the ideal generated by the initial terms of the elements in $L$. 
Clearly $L\subseteq \ker(\pi_{\mathbb{F}})$, hence $J$ is contained in the ideal $K$ generated by the initial terms of the ideal $\ker(\pi_{\mathbb{F}})$. 
By assumption the elements of $M\setminus J$ are all minimal in their congruence class with respect to $\sim$. In particular, they are pairwise incongruent, hence they are mapped by $\pi_{\mathbb{F}}$ to elements in $\mathbb{F}[M/\sim]$ that are linearly independent over $\mathbb{F}$. 
On the other hand, $M\setminus J\supseteq M\setminus K$, and the latter is mapped 
by $\pi_{\mathbb{F}}$ to an $\mathbb{F}$-vector space basis of  $\mathbb{F}[M/\sim]$ 
(see for example \cite[Proposition 1.1]{sturmfels}). 
It follows that $M\setminus J= M\setminus K$, implying in turn that $J=K$. 
The latter equality means that $L$ is a Gr\"obner basis of $\ker(\pi_{\mathbb{F}})$. 
\end{proof} 

We keep the notation of Section~\ref{sec:repetition}. 
In particular, $S$ is a reduced, affine, graded monoid, and $\tilde S$ is the monoid defined as in \eqref{eq:tildeS}. Fix an admissible total order $\prec$ on 
$M=\{x^{\alpha}\mid \alpha\in \mathbb{N}_0^{\mathcal{A}(S)}$. Define an admissible total order (denoted also by $\prec$) on the free monoid $\tilde M$ generated by 
$\{x_a\mid a\in\mathcal{A}(\tilde S)\}$ as follows. 
Enumerate the atoms in $\mathcal{A}(S)=\{a_1,\dots,a_n\}$ such that 
$x_{a_1}\prec x_{a_2} \prec\cdots \prec x_{a_n}$. Set $d_i=|a_i|$. 
For $\lambda,\mu\in \mathbb{N}_0^{\mathcal{A}(\tilde S)}$ we set $x^{\mu}\prec x^{\lambda}\in \tilde M$ if 
\begin{enumerate}
\item $x^{\kappa(\mu)}\prec x^{\kappa(\lambda)}$ in $(M,\prec)$; or 
\item  $\kappa(\mu)=\kappa(\lambda)$ and the sequence 
\begin{align*}\left(\mu(a_1[0]),\mu(a_1[1]),\dots,\mu(a_1[d_1]),\mu(a_2[0]),\dots,\mu(a_n[0]),\dots,
\mu(a_n[d_n])\right)\end{align*} 
is lexicographically greater than 
\[(\lambda(a_1[0]),\lambda(a_1[1]),\dots,\lambda(a_1[d_1]),\lambda(a_2[0]),\dots,\lambda(a_n[0]),\dots,
\lambda(a_n[d_n])).\]
\end{enumerate} 

\begin{theorem}\label{thm:groebner} 
Suppose that $\{(x^{\lambda},x^{\mu})\mid (\lambda,\mu)\in\Lambda\}$ is a Gr\"obner system of the semigroup congruence $\sim_S$. Then 
$\Gamma_1\cup\Gamma_2\cup \Gamma_3$ is a Gr\"obner system of the defining congruence $\sim_{\tilde S}$ of $\tilde S$, where 
\begin{align*}
\Gamma_1=\{(x^{\lambda},x^{\mu})\mid   (\kappa(\lambda),\kappa(\mu))\in \Lambda, 
\quad \delta(\lambda)=\delta(\mu)\} 
\end{align*}  
\begin{align*}
\Gamma_2=\{(x_{a[i]}x_{b[j]},x_{a[i-1]}x_{b[j+1]}) \mid a,b\in\mathcal{A}(S),\ x_a\prec x_b, \ 
0< i \le |a|,
\  0\le j < |b|\} 
\end{align*} 
\begin{align*} 
\Gamma_3=\{(x_{a[i]}x_{a[j]},x_{a[i-1]}x_{a[j+1]})\mid a \in\mathcal{A}(S),\ 0< i\le j< |a|\}.
\end{align*}
\end{theorem}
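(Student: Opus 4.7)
The plan is to verify conditions (i) and (ii) of Definition~\ref{def:groebner} for $\Gamma := \Gamma_1 \cup \Gamma_2 \cup \Gamma_3$. Condition (i) is a routine check. For $(x^\lambda, x^\mu) \in \Gamma_1$, the relation $x^\lambda \sim_{\tilde S} x^\mu$ is exactly \eqref{eq:kappa_delta}, and clause (1) of the term order on $\tilde M$ combined with the Gr\"obner property of $\Lambda$ in $(M,\prec)$ gives $x^\mu \prec x^\lambda$. For the pairs in $\Gamma_2$ and $\Gamma_3$, both sides share the $\kappa$-image ($x_a x_b$ or $x_a^2$) and the $\delta$-value $i+j$, so they are $\sim_{\tilde S}$-congruent; the two exponent sequences first disagree at position $a[i-1]$, where the right-hand side has $1$ and the left-hand side has $0$, so clause (2) of the term order makes the right-hand side $\prec$-smaller.

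For condition (ii) I would take $\alpha \in \mathbb{N}_0^{\mathcal{A}(\tilde S)}$ such that no left-hand side of a pair in $\Gamma$ divides $x^\alpha$ in $\tilde M$, and show that $x^\alpha$ is $\prec$-minimal in its $\sim_{\tilde S}$-class. The first step is to verify that $x^{\kappa(\alpha)}$ is $\prec$-minimal in its $\sim_S$-class. If it were not, some $(\lambda, \mu) \in \Lambda$ would satisfy $x^\lambda \mid x^{\kappa(\alpha)}$, and since $\lambda(a) \le \kappa(\alpha)(a) = \sum_i \alpha(a[i])$ for every atom $a \in \mathcal{A}(S)$, one can distribute $\lambda(a)$ units among the positions $a[0], \ldots, a[|a|]$ respecting the capacities $\alpha(a[i])$ to obtain a lift $\lambda' \le \alpha$ with $\kappa(\lambda') = \lambda$. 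The equality $|a^\mu| = |a^\lambda| \ge \delta(\lambda')$ then allows a compatible lift $\mu'$ of $\mu$ with $\delta(\mu') = \delta(\lambda')$, yielding a pair $(\lambda', \mu') \in \Gamma_1$ whose left-hand side divides $x^\alpha$, a contradiction.

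The second step is to analyse the combinatorial shape forced on $\alpha$ by the absence of $\Gamma_2$- and $\Gamma_3$-divisibility: for every pair $a, b \in \mathcal{A}(S)$ with $x_a \prec x_b$, either all the weight of $\alpha$ on $a$ sits at $a[0]$ or all the weight on $b$ sits at $b[|b|]$; and for each single atom $a$ the set of middle indices $\{i : 0 < i < |a|,\, \alpha(a[i]) > 0\}$ has at most one element, at which the value is $1$. A direct combinatorial check shows that this normal form is uniquely determined by the pair $(\kappa(\alpha), \delta(\alpha))$: the atoms in the support of $\kappa(\alpha)$ split into an initial block (in the ordering of $\mathcal{A}(S)$) with all weight at position $0$, a final block with all weight at the maximal position, and at most one pivot atom whose distribution is rigidly fixed by the residual $\delta$-budget left after the final block has absorbed its maximal contribution. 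Since every non-normal element of the $(\kappa(\alpha), \delta(\alpha))$-fiber admits a $\prec$-decreasing rewrite by some pair in $\Gamma_2 \cup \Gamma_3$, artinianness of $\prec$ forces it to reduce in finitely many steps to the unique normal form $\alpha$, so $\alpha$ is $\prec$-minimal in its fiber.

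Combining the two steps finishes condition (ii): for any $\beta$ with $x^\beta \sim_{\tilde S} x^\alpha$, relation \eqref{eq:kappa_delta} gives $\kappa(\beta) \sim_S \kappa(\alpha)$ and $\delta(\beta) = \delta(\alpha)$; the first step rules out $x^{\kappa(\beta)} \prec x^{\kappa(\alpha)}$, so clause (1) of $\prec$ either directly gives $x^\beta \succ x^\alpha$ or reduces us to the case $\kappa(\beta) = \kappa(\alpha)$, in which clause (2) together with the second step gives $x^\beta \succeq x^\alpha$. The main technical point I expect to wrestle with is the uniqueness of the normal form in the second step; I plan to verify it directly by the greedy distribution argument sketched above, the only delicate case being when the residual $\delta$-budget is an exact multiple of the pivot atom's degree, though a more conceptual proof via termination of the quadratic rewriting together with Newman's lemma applied to the finitely many critical overlaps of pairs in $\Gamma_2 \cup \Gamma_3$ would also work.
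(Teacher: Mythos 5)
Your proposal is correct and follows essentially the same route as the paper: condition (i) is checked directly, and condition (ii) splits according to whether $x^{\kappa(\alpha)}$ is minimal in its $\sim_S$-class, with $\Gamma_1$ obtained by lifting a pair of $\Lambda$ (using $\delta(\lambda')\le |a^{\lambda}|=|a^{\mu}|$ to match the $\delta$-values) and $\Gamma_2\cup\Gamma_3$ handling non-minimality inside a fixed $(\kappa,\delta)$-fiber. The only difference is one of detail: the paper dismisses the fiber step with ``it is easy to deduce from condition 2.\ of the ordering,'' whereas you actually carry out the normal-form analysis (initial low block, final high block, one pivot atom determined by division with remainder of the residual $\delta$-budget), which is a correct and welcome filling-in of that gap.
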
 

\begin{proof} 
Take $(x^{\lambda},x^{\mu})\in \Gamma_1$. Then $(\kappa(\lambda),\kappa(\mu))\in \Lambda$, hence $x^{\kappa(\lambda)}\sim_S x^{\kappa(\mu)}$ and $x^{\kappa(\mu)}\prec x^{\kappa(\lambda)}\in M$. It follows that $x^{\mu}\prec x^{\lambda}\in \tilde M$. Moreover,   
$x^{\kappa(\lambda)}\sim_S x^{\kappa(\mu)}$ and $\delta(\lambda)=\delta(\mu)$ imply 
$x^{\lambda}\sim_{\tilde S} x^{\mu}$ by \eqref{eq:kappa_delta}. 
Therefore condition (i) of Definition~\ref{def:groebner} holds for the elements of $\Gamma_1$. It obviously holds for the elements of $\Gamma_2$ and $\Gamma_3$ by definition of the ordering $\prec$ on $\tilde M$. 

It remains to check that condition (ii) of Definition~\ref{def:groebner} holds for $\Gamma_1\cup \Gamma_2\cup \Gamma_3$. In order to do so, take $\lambda\in\mathbb{N}_0^{\mathcal{A}(\tilde S)}$ such that $x^{\lambda}\in \tilde M$ is not minimal in its congruence class with respect to $\sim_{\tilde S}$.  

Assume first that 
$x^{\kappa(\lambda)}\in M$ is not minimal in its congruence class with respect to $\sim_S$. 
Then by the assumption of the theorem on $\Lambda$, there exist $(\alpha,\beta)\in \Lambda $ and $\gamma\in \mathbb{N}_0^{\mathcal{A}(S)}$ such that 
$\kappa(\lambda)=\alpha+\gamma$. Clearly there exist 
$\tilde\alpha,\tilde\gamma\in \mathbb{N}_0^{\mathcal{A}(\tilde S)}$ with $\lambda=\tilde\alpha+\tilde\gamma$, $\kappa(\tilde\alpha)=\alpha$, and $\kappa(\tilde \gamma)=\gamma$. 
Also $x^{\alpha}\sim_S x^{\beta}$ implies 
$\sum_{a\in\mathcal{A}(S)}\alpha(a)|a|=\sum_{a\in\mathcal{A}(S)}\beta(a)|a|$. 
It is easy to infer from this equality the existence of $\tilde\beta\in \mathbb{N}_0^{\mathcal{A}(\tilde S)}$ with $\kappa(\tilde\beta)=\beta$ and $\delta(\tilde\beta)=\delta(\tilde\alpha)$. 
Moreover, $(\alpha,\beta)\in \Lambda$ implies $x^{\beta}\prec x^{\alpha}$, hence 
$x^{\tilde\beta}\prec x^{\tilde\alpha}$. So $(x^{\tilde\alpha}, x^{\tilde\beta})\in \Gamma_1$ by definition of $\Gamma_1$. Therefore $\Gamma_1$ testifies the non-minimality of $x^{\lambda}$ as it is required by (ii) of Definition~\ref{def:groebner}. 

Suppose next that $x^{\kappa(\lambda)}$ is minimal in its congruence class in $M$ with respect to 
$\sim_s$, and $x^{\lambda}\in \tilde M$ is not minimal in its congruence class with respect to $\sim_{\tilde S}$. It is easy to deduce from condition 2. of the definition of the ordering 
$\prec$ on $\tilde M$ that there must exist 
$(y,z)\in \Gamma_2\cup\Gamma_3$ such that $y$ divides $x^{\lambda}$. 
Consequently, the non-minimality of $x^{\lambda}$ is testified by $\Gamma_2\cup \Gamma_3$ as it is required by (ii) of Definition~\ref{def:groebner}. 
\end{proof}

\begin{remark}\label{remark:groebner}
The papers \cite{ohsugi_hibi} and \cite{shibuta} mentioned in Remark~\ref{remark:groebner} 
also give Gr\"obner bases of the binomial ideals considered there. 
\end{remark}

We call a Gr\"obner system $\Lambda$ {\it quadratic} if $|\lambda|\le 2$, $|\mu|\le 2$ for all 
$(\lambda,\mu)\in\Lambda$. 

\begin{corollary} \label{cor:quadratic_groebner}
If the semigroup congruence $\sim_S$ has a quadratic Gr\"obner system, then the 
the semigroup congruence $\sim_{\tilde S}$ also has a quadratic Gr\"obner system. 
\end{corollary}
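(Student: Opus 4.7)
The plan is to read off the conclusion directly from Theorem~\ref{thm:groebner}. Starting from a quadratic Gr\"obner system $\Lambda$ of $\sim_S$, I would verify that the Gr\"obner system $\Gamma_1\cup\Gamma_2\cup\Gamma_3$ of $\sim_{\tilde S}$ produced by that theorem is itself quadratic; the content of the corollary is essentially just degree bookkeeping on top of Theorem~\ref{thm:groebner}.

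First I would dispense with $\Gamma_2$ and $\Gamma_3$. By construction, every element of $\Gamma_2\cup\Gamma_3$ has the form $(x_{a[i]}x_{b[j]},x_{a[i-1]}x_{b[j+1]})$, so both components are monomials of total degree $2$ in $\tilde M$. Hence $\Gamma_2\cup\Gamma_3$ is automatically quadratic, without any hypothesis on $\Lambda$.

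The only work is transferring the quadratic bound from $\Lambda$ to $\Gamma_1$. The key identity I would use is
\[
|\lambda|=\sum_{a\in\mathcal{A}(S)}\sum_{i=0}^{|a|}\lambda(a[i])=\sum_{a\in\mathcal{A}(S)}\kappa(\lambda)(a)=|\kappa(\lambda)|
\]
for every $\lambda\in\mathbb{N}_0^{\mathcal{A}(\tilde S)}$, which is immediate from the definition of $\kappa$ by swapping the order of summation. Given $(x^{\lambda},x^{\mu})\in\Gamma_1$, the defining condition $(\kappa(\lambda),\kappa(\mu))\in\Lambda$ together with the quadraticity of $\Lambda$ then gives $|\lambda|=|\kappa(\lambda)|\le 2$ and $|\mu|=|\kappa(\mu)|\le 2$.

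I do not expect any substantive obstacle here: once Theorem~\ref{thm:groebner} has supplied the Gr\"obner system $\Gamma_1\cup\Gamma_2\cup\Gamma_3$, the only step left is the elementary observation that $\kappa$ preserves total degree, which bounds the degrees in $\Gamma_1$ by those in $\Lambda$, while $\Gamma_2$ and $\Gamma_3$ are quadratic by inspection.
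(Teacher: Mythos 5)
Your proof is correct and matches the paper's (implicit) argument: the paper states the corollary without proof as an immediate consequence of Theorem~\ref{thm:groebner}, and the degree bookkeeping you supply --- $\Gamma_2\cup\Gamma_3$ quadratic by inspection, and $|\lambda|=|\kappa(\lambda)|$ transferring the bound from $\Lambda$ to $\Gamma_1$ --- is exactly the intended justification.
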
 

Note that if $S$ has a quadratic G\"obner system, then the semigroup algebra 
$\mathbb{C}[S]$ is Koszul (see \cite{polishchuk_positelski} for background on Koszul algebras). 
An iterated use of Corollary~\ref{cor:quadratic_groebner} yields the following: 

\begin{corollary}\label{cor:quadratic_B(g)} 
If $\mathcal{B}(\mathrm{supp}(\underline{g}))$ has a quadratic Gr\"obner system, then 
$\mathcal{B}(\underline{g})$ also has a quadratic Gr\"obner system, and hence 
the semigroup algebra $\mathbb{C}[\mathcal{B}(\underline{g})]$ is Koszul. 
\end{corollary}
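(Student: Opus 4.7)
The plan is to iterate Corollary~\ref{cor:quadratic_groebner}. As observed in the discussion preceding Corollary~\ref{cor:B(chi-tilde)} in Section~\ref{sec:repetition}, the monoid $\mathcal{B}(\underline{g})$ is built up from $\mathcal{B}(\mathrm{supp}(\underline{g}))$ by repeated application of the $\tilde S$ construction \eqref{eq:tildeS}: each duplication of a component already present in the tuple corresponds, up to a permutation of coordinates, to passing from a suitably graded monoid $S$ to $\tilde S$.

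More explicitly, write $\mathrm{supp}(\underline{g})=\{h_1,\dots,h_r\}$ and, after permuting the components of $\underline{g}$ (which merely induces an isomorphism of monoids), assume $\underline{g}=(h_1,\dots,h_r,g_{r+1},\dots,g_m)$ with each $g_j$ for $j>r$ equal to some $h_i$. Set $\underline{g}^{(k)}:=(h_1,\dots,h_r,g_{r+1},\dots,g_{r+k})$ for $0\le k\le m-r$, so that $\mathcal{B}(\underline{g}^{(0)})\cong \mathcal{B}(\mathrm{supp}(\underline{g}))$ and $\mathcal{B}(\underline{g}^{(m-r)})=\mathcal{B}(\underline{g})$. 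At each step, after reordering the components so that the newly duplicated element appears last, Proposition~\ref{prop:B(tilde-g)} gives $\mathcal{B}(\underline{g}^{(k+1)})\cong \tilde{S^{(k)}}$, where $S^{(k)}:=\mathcal{B}(\underline{g}^{(k)})$ is endowed with the grading coming from the coordinate of the element being repeated.

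The induction is now immediate. The base case is the hypothesis: $\mathcal{B}(\underline{g}^{(0)})\cong \mathcal{B}(\mathrm{supp}(\underline{g}))$ admits a quadratic Gr\"obner system. Given a quadratic Gr\"obner system for $S^{(k)}$ with respect to some admissible order, Corollary~\ref{cor:quadratic_groebner} produces one for $\tilde{S^{(k)}}\cong \mathcal{B}(\underline{g}^{(k+1)})$. After $m-r$ iterations we obtain a quadratic Gr\"obner system for $\mathcal{B}(\underline{g})$, and the Koszulness of $\mathbb{C}[\mathcal{B}(\underline{g})]$ then follows from the standard fact recorded in the paragraph preceding the corollary.

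The only subtle point — more a matter of bookkeeping than a genuine obstacle — is to verify that the permutation of coordinates performed at each step is harmless: since it is an isomorphism of affine monoids and transports any admissible order (together with its associated quadratic Gr\"obner system) to the reordered monoid, the inductive hypothesis really is preserved at each stage.
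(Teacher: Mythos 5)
Your proposal is correct and follows exactly the route the paper intends: the paper states the corollary as ``an iterated use of Corollary~\ref{cor:quadratic_groebner}'', relying on the observation before Proposition~\ref{prop:B(tilde-g)} that $\mathcal{B}(\underline{g})$ arises from $\mathcal{B}(\mathrm{supp}(\underline{g}))$ by repeated application of the construction \eqref{eq:tildeS}. Your write-up merely makes the induction and the harmless coordinate permutations explicit, which is a faithful elaboration of the same argument.
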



\section{Relation to invariant theory}\label{sec:invariant} 

We need to recall a result from invariant theory. 
Let $H$ be a linearly reductive subgroup of the group $GL(V)$ of invertible linear transformations of a finite dimensional vector space $V$ over an algebraically closed field $\mathbb{F}$.  
The action of $H$ on $V$ induces an action via graded $\mathbb{F}$-algebra automorphims on the symmetric tensor algebra $S(V)$ of $V$ (graded in the standard way, namely $V\subset S(V)$ is the degree $1$ homogeneus component). 
Since $H$ is linearly reductive, the algebra $S(V)^H=\{f\in S(V)\mid h\cdot f=f \ \forall h\in H\}$ of polynomial invariants is known to be finitely generated. Let $f_1,\dots,f_n$ be a minimal homogeneous generating system of $S(V)^H$,  enumerated  so that 
$\deg(f_1)\ge\deg(f_2)\ge \cdots \ge \deg(f_n)$. Consider the 
$\mathbb{F}$-algebra surjection 
\begin{equation}\label{eq:S(V)^G_presentation}
\varphi:\mathbb{F}[x_1,\dots,x_n]\to S(V)^H \mbox{ with }x_i\mapsto f_i\quad  
(i=1,\dots,n). 
\end{equation} 
Endow $\mathbb{F}[x_1,\dots,x_n]$ with the grading diven by $\deg(x_i)=\deg(f_i)$, so 
$\varphi$ is a homomorphism of graded algebras. 
Recall that the factor of $S(V)$ modulo the ideal generated by $f_1,\dots,f_n$ is called the {\it algebra of coinvariants}. It is a finite dimensional graded vector space when $H$ is finite; in this case  
write $b(H,V)$ for its top degree (equivalently, all homogeneous elements in $S(V)$ of degree greater than $b(H,V)$ belong to the Hilbert ideal $S(V)f_1+\cdots+S(V)f_n$, and there is a homogeneous element in $S(V)$ of degree $b(H,V)$ not contained in the Hilbert ideal). 
Denote by $s$ the  Krull dimension of $S(V)^H$.  Note that $s\le n$ with equality only if 
$\ker(\varphi)=\{0\}$. 

\begin{theorem}\label{thm:derksen} {\rm (Derksen \cite[Theorems 1 and 2]{derksen})}  
\begin{itemize}
\item[(i)] We have the inequality 
$\displaystyle \mu(\ker(\varphi))\le \sum_{i=1}^{\min\{n,s+1\}}\deg(f_i)-s$, 
\item[(ii)]
When $H$ is finite,  we have the inequality 
$\mu(\ker(\varphi))\le 2b(G,V)+2$.  
\end{itemize} 
\end{theorem}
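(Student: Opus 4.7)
The plan is to reformulate $\mu(\ker(\varphi))$ cohomologically and then exploit the Reynolds operator of the linearly reductive group $H$. By the graded Nakayama lemma, $\mu(\ker(\varphi))$ is the top nonvanishing graded degree of $\mathrm{Tor}_1^R(S(V)^H,\mathbb{F})$, where $R=\mathbb{F}[x_1,\dots,x_n]$ is graded by $\deg(x_i)=\deg(f_i)$. Hence the task is to produce a partial free resolution of $S(V)^H$ over $R$ whose first syzygies are controlled in degree. The Reynolds operator $\rho:S(V)\to S(V)^H$ is $S(V)^H$-linear, and provides the standard bridge between exactness in the ambient polynomial algebra $S(V)$ and exactness in the subalgebra of invariants.

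For (i), I would first use prime avoidance to select, among $\{f_1,\dots,f_n\}$, a homogeneous system of parameters $\theta_1,\dots,\theta_s$ for $S(V)^H$. Since the generators are enumerated in weakly decreasing order of degree, such an hsop can be chosen with degrees termwise bounded by $\deg(f_1),\dots,\deg(f_s)$. Then $S(V)^H$ is a finitely generated module over $A:=\mathbb{F}[\theta_1,\dots,\theta_s]\subseteq S(V)^H$, and the relations among $f_1,\dots,f_n$ in $R$ split into two families: the Koszul syzygies of the hsop itself, and the relations expressing each non-hsop generator $f_i$ ($i>s$) in terms of a chosen $A$-module generating set. The Koszul complex on $\theta_1,\dots,\theta_s$, applied to $S(V)$ and then averaged by $\rho$, bounds both kinds of syzygies by sums of degrees of the form $\sum_{j=1}^{s}\deg(\theta_j)+\deg(f_i)-s$; taking the worst case $i=s+1$ and using $\deg(\theta_j)\le\deg(f_j)$ recovers the claimed bound.

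For (ii), with $H$ finite, I would use directly the characterization of $b(H,V)$: every homogeneous element of $S(V)$ of degree strictly greater than $b(H,V)$ belongs to the Hilbert ideal $\sum_i S(V)f_i$. A homogeneous relation $P\in\ker(\varphi)$ of large degree $d$ lifts via the generators to an element of $S(V)$ that can be divided by this ideal twice -- once to ``enter'' $S(V)$ from $R$, and once to reduce back into the invariants -- each step costing at most $b(H,V)+1$ in degree but lowering the effective complexity. Two such steps reduce $P$ modulo lower-degree relations, producing the bound $2b(H,V)+2$.

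The main obstacle is the sharp bookkeeping in (i): one must carefully track the interaction between the Koszul syzygies of the hsop and the integrality relations of the remaining $n-s$ generators over $A$, ensuring that no spurious higher-degree contribution appears and that exactly $\deg(f_{s+1})$ (rather than some larger quantity) enters the bound. Part (ii) is technically more elementary once the Reynolds-and-Koszul framework of (i) is internalized, and essentially amounts to isolating the correct double application of the Hilbert-ideal reduction.
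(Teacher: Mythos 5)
You should note at the outset that the paper contains no proof of this statement: Theorem~\ref{thm:derksen} is imported verbatim from Derksen's article (Theorems 1 and 2 of \cite{derksen}) and used as a black box, so there is no internal argument to compare yours with, and your attempt must stand on its own against Derksen's original proof. Judged that way, your sketch assembles several of the right ingredients (graded Nakayama, the Reynolds operator as the source of the direct-summand property $S(V)^H\subseteq S(V)$, the characterization of $b(H,V)$ via the Hilbert ideal), but both parts have genuine gaps and neither is a proof as written.

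In (i) the opening move already fails: prime avoidance does not let you choose a homogeneous system of parameters \emph{from among} the minimal generators $f_1,\dots,f_n$; it only produces homogeneous elements of $S(V)^H$ (generically, combinations of products of the $f_i$), since each individual $f_i$ may lie in some minimal prime even though the irrelevant ideal avoids them all. Consequently the asserted degree control $\deg(\theta_j)\le\deg(f_j)$ is unsupported. More seriously, your decomposition of $\ker(\varphi)$ is not correct: the ``Koszul syzygies of the hsop'' are the trivial commutativity relations and contribute nothing to $\ker(\varphi)$; the substantive relations are the $A$-module syzygies of a set of module generators of $S(V)^H$ over $A=\mathbb{F}[\theta_1,\dots,\theta_s]$ together with the multiplication table, and bounding the degrees of those module generators --- which is where the direct-summand property and the negativity of the $a$-invariant actually enter --- is the heart of the matter and is nowhere addressed. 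The final bound is simply asserted: you never explain where the single extra summand $\deg(f_{s+1})$ or the correction $-s$ comes from. In (ii) the argument is not well formed: a relation $P\in\ker(\varphi)$ maps to $0$ in $S(V)$, so it does not ``lift to an element of $S(V)$ that can be divided by the Hilbert ideal''. The reduction has to operate on the individual monomials $f^{\alpha}$ occurring in $P$: one splits each exponent, uses that every invariant of degree exceeding $b(H,V)$ lies in the Hilbert ideal to write it as $\sum_j g_jf_j$ with $g_j\in S(V)$, and then applies the Reynolds operator to the coefficients $g_j$ to return to invariants of strictly smaller degree, setting up an induction. That Reynolds step is exactly where linear reductivity is used, and it never appears in your paragraph for (ii); ``each step costing at most $b(H,V)+1$ in degree but lowering the effective complexity'' is not a mathematical statement, and without the induction it is meant to drive, the bound $2b(H,V)+2$ does not follow. (A minor point: the $b(G,V)$ in the statement should read $b(H,V)$, the group acting being $H$.)
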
 

The {\it Davenport constant of a finite subset $G_0$} of an abelian group $G$ is defined as 
\[\mathsf{D}(G_0)=\max\{|\alpha|\colon \alpha \in\mathcal{A}(\mathcal{B}(G_0))\},\] 
where $|\alpha|=\sum_{g\in G_0}\alpha(g)$. 
When  $G$ is finite, the {\it little Davenport constant of $G_0$} can be defined as 
\[\mathsf{d}(G_0)=\max\{|\alpha|\colon \forall \gamma\in\mathcal{A}(\mathcal{B}(G_0))
\ \exists g\in G_0 \mbox{ with }\gamma(g)>\alpha(g)\},\] 
the maximal length of a sequence over $G_0$ containing no product-one subsequence 
(see \cite[Proposition 5.1.3.2]{geroldinger_halter-koch}). 

Now let $\underline{g}=(g_1,\dots,g_m)$ be a sequence of elements from an arbitrary abelian group $G$, and use the notation developed in Section~\ref{sec:repetition}.  
Consider the following grading of the block monoid $\mathcal{B}(\underline{g})$: for $\alpha\in \mathcal{B}(\underline{g})$ its degree is $|\alpha|=\sum_{i=1}^m\alpha_i$. The graded catenary degree $c_{\mathrm{gr}}(\mathcal{B}(\underline{g}))$ is defined in Definition~\ref{def:graded_cat_degree} accordingly. 
Denote by $r(\mathcal{B}(\underline{g}))$ the rank of the free abelian subgroup 
in $\mathbb{Z}^m$ generated by $\mathcal{B}(\underline{g})$. 
Obviously $|\mathcal{A}(\mathcal{B}(\underline{g}))|\ge r(\mathcal{B}(\underline{g}))$ with equality if and only if 
$\mathcal{B}(\underline{g})$ is a free monoid. 
Set 
\[\mathcal{A}(\mathcal{B}(\mathrm{supp}(\underline{g}))):=\{a_1,\dots,a_n\}
\mbox{ with }|a_1|\ge |a_2|\ge \cdots \ge |a_n|.\] 

\begin{theorem}\label{thm:c_w(B(g))}
\begin{itemize} 
\item[(i)] We have the inequalities 
\[c_{\mathrm{gr}}(\mathcal{B}(\underline{g}))\le\max\{2|a_1|,c_{\mathrm{gr}}(\mathcal{B}(\mathrm{supp}(\underline{g})))\},\] 
and  
\[c_{\mathrm{gr}}(\mathcal{B}(\mathrm{supp}(\underline{g})))\le 
\sum_{i=1}^{\min\{n,r+1\}}|a_i| -r\]
where  $r=r(\mathcal{B}(\mathrm{supp}(\underline{g})))$. 
  
\item[(ii)] If $g_1,\dots,g_m$ generate a finite subgroup of $G$, then
\[c_{\mathrm{gr}}(\mathcal{B}(\underline{g}))\le 2 \mathsf{d}(\mathrm{supp}(\underline{g}))+2.\] 
\end{itemize}
\end{theorem}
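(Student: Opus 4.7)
The plan is to combine three ingredients: the iterated splitting construction from Section~\ref{sec:repetition}, which builds a presentation of $\mathcal{B}(\underline{g})$ from one of $\mathcal{B}(\mathrm{supp}(\underline{g}))$; the ring-theoretic characterization in Corollary~\ref{cor:mu=c} of $c_{\mathrm{gr}}$ as the minimal degree of generators of the binomial ideal of relations; and Derksen's degree bounds (Theorem~\ref{thm:derksen}) applied to the invariant ring $S(V)^H\cong \mathbb{C}[\mathcal{B}(\mathrm{supp}(\underline{g}))]$, where $H$ is the diagonalizable (hence linearly reductive) algebraic group with character group $\langle \mathrm{supp}(\underline{g})\rangle$ and $V$ is the multiplicity-free $H$-module with weight set $\mathrm{supp}(\underline{g})$. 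Under this dictionary the minimal homogeneous generators $f_i$ of $S(V)^H$ correspond to the atoms $a_i$ of $\mathcal{B}(\mathrm{supp}(\underline{g}))$ with $\deg(f_i)=|a_i|$, and the Krull dimension of $S(V)^H$ equals $r=r(\mathcal{B}(\mathrm{supp}(\underline{g})))$.

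For the first inequality of (i), I would use that $\mathcal{B}(\underline{g})$ is obtained from $S:=\mathcal{B}(\mathrm{supp}(\underline{g}))$ by iterated application of the construction $S\mapsto \tilde S$ of \eqref{eq:tildeS}, via Proposition~\ref{prop:B(tilde-g)}. Starting from a generating system $\Lambda$ of $\sim_S$ whose elements have graded weight at most $c_{\mathrm{gr}}(S)$ (Proposition~\ref{prop:graded-catenary}), each iteration of Theorem~\ref{thm:tildeS_presentation} produces a new generating system with two kinds of relations. The lifted relations $(x^{\lambda[i]},x^{\mu[i]})$ have graded weight equal to that of $(x^\lambda,x^\mu)$, since every split atom $a[i]$ inherits the degree $|a|$ and $\kappa$ preserves $|\cdot|_{\mathrm{gr}}$. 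The quadratic relations $(x_{a[k]}x_{b[l]},x_{a[k+1]}x_{b[l-1]})$ have graded weight $|a|+|b|\le 2|a_1|$. Hence Proposition~\ref{prop:graded-catenary} delivers $c_{\mathrm{gr}}(\mathcal{B}(\underline{g}))\le \max\{2|a_1|,c_{\mathrm{gr}}(S)\}$.

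For the second inequality of (i), applying Derksen's Theorem~\ref{thm:derksen}(i) to the map $\varphi$ in \eqref{eq:S(V)^G_presentation} in the setup above gives $\mu(\ker(\varphi))\le \sum_{i=1}^{\min\{n,r+1\}}|a_i|-r$. Corollary~\ref{cor:mu=c}, with the grading of $\mathbb{C}[x_1,\dots,x_n]$ determined by $\deg(x_i)=|a_i|$, translates this directly into the claimed bound on $c_{\mathrm{gr}}(\mathcal{B}(\mathrm{supp}(\underline{g})))$.

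For part (ii), when $\langle \mathrm{supp}(\underline{g})\rangle$ is finite the group $H$ is finite, and Theorem~\ref{thm:derksen}(ii) yields $\mu(\ker(\varphi))\le 2b(H,V)+2$. The identification $b(H,V)=\mathsf{d}(\mathrm{supp}(\underline{g}))$ follows from the standard observation that a monomial in the weight-space basis vectors of $V$ lies outside the Hilbert ideal if and only if the corresponding sequence in $\mathrm{supp}(\underline{g})$ contains no product-one subsequence, so the top degree of the coinvariant algebra coincides with the little Davenport constant. Combining this with the first inequality of (i) and the elementary estimate $|a_1|\le \mathsf{D}(\mathrm{supp}(\underline{g}))\le \mathsf{d}(\mathrm{supp}(\underline{g}))+1$ (removing any element of an atom of maximal length produces a product-one-free sequence) absorbs the $2|a_1|$ term into $2\mathsf{d}+2$, giving the claimed bound. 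The main obstacle I anticipate is the careful bookkeeping of the graded weights through the iterated splittings, together with confirming that the dictionary between $\mathbb{C}[\mathcal{B}(G_0)]$ and $S(V)^H$ respects the grading conventions and that Derksen's hypotheses apply to the diagonalizable group $H$ even when the ambient abelian group is infinitely generated.
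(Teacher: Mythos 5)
Your proposal is correct and follows essentially the same route as the paper: the dictionary $\mathbb{C}[\mathcal{B}(\underline{g})]\cong S(V)^H$ for the diagonalizable group $H$ with $X(H)\cong\langle g_1,\dots,g_m\rangle$, Derksen's bounds via Corollary~\ref{cor:mu=c} applied to the multiplicity-free case, and the iterated splitting of Theorem~\ref{thm:tildeS_presentation} with Proposition~\ref{prop:B(tilde-g)} to pass from $\mathcal{B}(\mathrm{supp}(\underline{g}))$ to $\mathcal{B}(\underline{g})$. In fact you make explicit two points the paper leaves implicit, namely that the total-degree grading is preserved by the splitting (so the lifted relations keep their graded weight and the quadratic relations have weight at most $2|a_1|$) and that $2|a_1|\le 2\mathsf{d}+2$ absorbs the quadratic term in part (ii).
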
 

\begin{proof} 
We may assume that the components of $\underline{g}$ generate $G$. So $G$ is a finitely generated abelian group, whence it is isomorphic to $G_1\times \mathbb{Z}^k$, where 
$G_1$ is a finite abelian group, and $\mathbb{Z}^k$ is the free abelian group of rank $k$. 
Consider the linear algebraic group group $H=G_1\times T$, where $T$ is the torus 
$(\mathbb{C}^{\times})^k$. For an abelian linear algebraic group $A$ denote by 
$X(A)$ the group of homomorphisms $A\to \mathbb{C}^\times$ (as algebraic groups). 
Then $X(G_1)\cong G_1$ and $X(T)\cong \mathbb{Z}^k$, whence 
$X(H)\cong G_1\times \mathbb{Z}^k\cong G$. 
From now on we identify $G$ with $X(H)$. 
Let $V$ be a $\mathbb{C}$-vector space with basis $x_1,\dots,x_m$, and define a  action 
of $H$  on $V$ via linear transformations by setting 
$h\cdot x_i=g_i(h)x_i$ for $i=1,\dots,m$.  
The algebra $S(V)$ is the polynomial algebra 
$\mathbb{C}[x_1,\dots,x_m]$. The monomials span $1$-dimensional invariant subspaces,  and for $\alpha=(\alpha_1,\dots,\alpha_m)\in \mathbb{N}_0^m$ and $h\in H$ we have that 
\[h\cdot x^{\alpha}=\left(\prod_{i=1}^mg_i(h)^{\alpha_i}\right)x^{\alpha}. \]
It follows that the map 
$\mathbb{N}_0^m\to S(V)$, $\alpha\mapsto x^{\alpha}$ induces an isomorphism of the 
semigroup algebras 
\begin{equation}\label{eq:F[B(chi)]}
\mathbb{C}[\mathcal{B}(\underline{g})]\stackrel{\cong}\longrightarrow \mathbb{C}[x_1,\dots,x_m]^H.
\end{equation} 
The isomorphism \eqref{eq:F[B(chi)]} is an isomorphism of graded algebras. 
We may select as homogeneous generators of $S(V)^H$ the monomials 
$\{x^{\alpha}\mid \alpha\in \mathcal{A}(\mathcal{B}(\underline{g}))\}$. Then the presentation 
\eqref{eq:S(V)^G_presentation} of $S(V)^H$ is identified via \eqref{eq:F[B(chi)]} 
with the presentation 
\eqref{eq:pi_R} of the semigroup algebra $\mathbb{C}[\mathcal{B}(\underline{g})]$. So $\mu(\ker(\varphi))=\mu(\ker(\pi_{\mathbb{C}}))$ (see \eqref{eq:pi_R} in Section~\ref{sec:ringtheoretic-catenarydegree} 
for the definition of $\pi_{\mathbb{C}}:\mathbb{C}[M]\to \mathbb{C}[\mathcal{B}(\underline{g})]$). 
By Corollary~\ref{cor:mu=c} we know that 
$\mu(\ker(\pi_{\mathbb{C}}))=c_{\mathrm{gr}}(\mathcal{B}(\underline{g}))$. 
On the other hand apply Theorem~\ref{thm:derksen} for $\mu(\ker(\varphi))$ in the special case when $g_1,\dots,g_m$ are distinct (i.e. when 
$\mathcal{B}(\underline{g})=\mathcal{B}(\mathrm{supp}(\underline{g}))$), and 
combine it with Theorem~\ref{thm:tildeS_presentation} and Proposition~\ref{prop:B(tilde-g)}. 
Taking into account that the explanation of \eqref{eq:F[B(chi)]} shows also 
$b(G,V)=\mathsf{d}(\mathrm{supp}(\underline{g}))$ and that the Krull dimension of 
$\mathbb{C}[\mathcal{B}(\underline{g})]$ coincides with the rank of the free abelian subgroup of $\mathbb{Z}^m$ generated by $\mathcal{B}(\underline{g})$, we get the desired statements. 
\end{proof}

\begin{remark} When $G\cong \mathbb{Z}^k$, the group $H$ in the above proof is an algebraic torus, and the results in \cite{wehlau} give various bounds for  
$|a_1|$ in Theorem~\ref{thm:c_w(B(g))} (i). 
Moreover, \cite{wehlau:1994} characterizes the cases when $\mathbb{C}[\mathcal{B}(\underline{g})]\cong S(V)^H$ (for a torus $H$) is a polynomial ring, i.e. when 
$c_{\mathrm{gr}}(\mathcal{B}(\underline{g})=0$. 
\end{remark} 

\begin{corollary}\label{cor:bound-c_w(G_0)} 
For any subset $G_0$ of a finite abelian group $G$ we have the inequalities 
\[c_{\mathrm{gr}}(\mathcal{B}(G_0))\le 2\mathsf{d}(G_0)+2\le 2\mathsf{D}(G)\le 2|G|.\]  
\end{corollary}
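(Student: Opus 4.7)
The plan is to verify the three inequalities of the chain separately; each follows from a single idea and none requires heavy machinery.

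The first inequality $c_{\mathrm{gr}}(\mathcal{B}(G_0)) \le 2\mathsf{d}(G_0) + 2$ is the most direct: I will take $\underline{g}=(g_1,\dots,g_m)$ to be any enumeration of the elements of $G_0$ with no repetitions, so that $\mathrm{supp}(\underline{g})=G_0$ and (by the remark following \eqref{eq:B(g)}) the monoids $\mathcal{B}(\underline{g})$ and $\mathcal{B}(G_0)$ coincide. Since $G$ is finite, the components of $\underline{g}$ certainly generate a finite subgroup, so Theorem~\ref{thm:c_w(B(g))}(ii) applies and yields the stated bound. The definitions of the graded catenary degree on $\mathcal{B}(\underline{g})$ and on $\mathcal{B}(G_0)$ agree under this identification (both use the standard length $|\alpha|=\sum \alpha(g)$), so nothing more is needed.

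The second inequality $\mathsf{d}(G_0)+1 \le \mathsf{D}(G)$ is handled by a ``complete the product'' trick. Let $\alpha \in \mathbb{N}_0^{G_0}$ be a product-one-free sequence of maximal length $\mathsf{d}(G_0)$, and set $h:=\prod_{g\in G_0} g^{\alpha(g)}\in G$; since $\alpha$ is product-one-free, $h\neq 1$. View $\alpha$ as an element of $\mathbb{N}_0^{G}$, and consider $\alpha':=\alpha+\mathbf{1}_{h^{-1}}\in\mathcal{B}(G)$, a sequence of length $\mathsf{d}(G_0)+1$ whose product is~$1$. I claim $\alpha'$ is an atom of $\mathcal{B}(G)$: a proper nontrivial product-one subsequence $\beta\le\alpha'$ either avoids $h^{-1}$ (then $\beta\le\alpha$, contradicting product-one-freeness) or contains $h^{-1}$, in which case $\beta-\mathbf{1}_{h^{-1}}\le\alpha$ has product $h$ and the complementary subsequence $\alpha-(\beta-\mathbf{1}_{h^{-1}})\le\alpha$ has product $1$, again contradicting product-one-freeness unless $\beta=\alpha'$. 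Hence $\mathsf{D}(G)\ge|\alpha'|=\mathsf{d}(G_0)+1$.

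The third inequality $\mathsf{D}(G)\le|G|$ is the classical partial-products argument. Given an atom $\alpha\in\mathcal{B}(G)$ listed as a sequence $g_1,\dots,g_k$ of length $k=|\alpha|$, the partial products $P_i:=g_1\cdots g_i$ for $i=0,1,\dots,k-1$ must all be distinct elements of $G$: an equality $P_i=P_j$ with $i<j\le k-1$ would give a proper nontrivial product-one subsequence $g_{i+1}\cdots g_j$, contradicting atomicity. Thus $k\le|G|$. None of the three steps presents a real obstacle; the only step requiring some care is the verification that $\alpha+\mathbf{1}_{h^{-1}}$ is an atom in step two, and that reduces to the standard cancellation argument sketched above.
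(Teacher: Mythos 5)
Your proof is correct and follows essentially the same route as the paper: the substantive first inequality is obtained by specializing Theorem~\ref{thm:c_w(B(g))}(ii) to a repetition-free enumeration of $G_0$, exactly as in the paper. The only difference is that for the remaining two inequalities the paper simply invokes the standard zero-sum facts $\mathsf{d}(G_0)\le\mathsf{d}(G)$, $\mathsf{d}(G)+1=\mathsf{D}(G)$ and $\mathsf{D}(G)\le|G|$ (with a citation), whereas you reprove them from scratch---correctly---via the usual complete-the-product and distinct-partial-products arguments.
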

\begin{proof} 
The first inequality is a special case of Theorem~\ref{thm:c_w(B(g))} (ii). 
To see the second inequality note the trivial inequality $\mathsf{d}(G_0)\le \mathsf{d}(G)$, and the well known equality $\mathsf{d}(G)+1=\mathsf{D}(G)$ 
(cf. \cite[Proposition 5.1.3.2]{geroldinger_halter-koch}). 
\end{proof} 

It follows immediately from Definitions~\ref{def:cat_degree} and 
\ref{def:graded_cat_degree} that 
\begin{equation}\label{eq:graded_cat_deg} 
c(S)\le \frac{1}{\min \{|a|\colon a\in \mathcal{A}(S)\}}c_{\mathrm{gr}}(S). 
\end{equation}

Therefore Theorem~\ref{thm:c_w(B(g))} implies bounds on the ordinary (not graded) catenary degree. For example, an immediate consequence of Corollary~\ref{cor:bound-c_w(G_0)} and \eqref{eq:graded_cat_deg} is the following:  
 
\begin{corollary} \label{cor:bound:c(G_0)} Let $G_0$ be a subset in a finite abelian group $G$. Then 
\[c(\mathcal{B}(G_0))\le 
\frac{2\mathsf{d}(G_0)+2}{\min \{|\alpha|\colon \alpha\in \mathcal{A}(\mathcal{B}(G_0))\}}\]
\end{corollary}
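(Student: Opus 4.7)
The plan is to assemble the bound directly from two ingredients that have already been established in the excerpt: the graded catenary bound in Corollary~\ref{cor:bound-c_w(G_0)}, namely $c_{\mathrm{gr}}(\mathcal{B}(G_0)) \le 2\mathsf{d}(G_0)+2$, and the general comparison inequality \eqref{eq:graded_cat_deg} relating the ordinary catenary degree to the graded one. Since the statement is explicitly advertised as an immediate consequence of these, no new construction is required; the work is just to instantiate \eqref{eq:graded_cat_deg} with the grading on $\mathcal{B}(G_0)$ used throughout Section~\ref{sec:invariant}.

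More concretely, I would first recall the grading on $\mathcal{B}(G_0)$: for $\alpha\in \mathcal{B}(G_0)\subseteq \mathbb{N}_0^{G_0}$ the degree is $|\alpha|=\sum_{g\in G_0}\alpha(g)$. With respect to this grading every atom $a\in \mathcal{A}(\mathcal{B}(G_0))$ has positive degree $|a|\ge 1$, so the monoid is connected graded in the sense of Section~\ref{sec:graded_monoid}, and both $c(\mathcal{B}(G_0))$ and $c_{\mathrm{gr}}(\mathcal{B}(G_0))$ are defined.

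Next I would apply \eqref{eq:graded_cat_deg} to $S=\mathcal{B}(G_0)$, yielding
\[
c(\mathcal{B}(G_0)) \le \frac{1}{\min\{|\alpha|\colon \alpha\in \mathcal{A}(\mathcal{B}(G_0))\}}\, c_{\mathrm{gr}}(\mathcal{B}(G_0)).
\]
Substituting the bound from Corollary~\ref{cor:bound-c_w(G_0)} into the numerator gives exactly the inequality claimed in Corollary~\ref{cor:bound:c(G_0)}.

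There is no real obstacle here; the only thing to be careful about is that the grading used in \eqref{eq:graded_cat_deg} matches the one used in Corollary~\ref{cor:bound-c_w(G_0)} (and thus in Theorem~\ref{thm:c_w(B(g))}), which it does by construction — it is the standard length grading on block monoids. So the proof amounts to a one-line citation of the two previous results.
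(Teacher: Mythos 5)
Your proposal is correct and follows exactly the paper's own (one-line) argument: the paper derives Corollary~\ref{cor:bound:c(G_0)} as an immediate consequence of Corollary~\ref{cor:bound-c_w(G_0)} and the inequality \eqref{eq:graded_cat_deg}, precisely as you do. Your added remark that the length grading makes $\mathcal{B}(G_0)$ connected graded (so that $c_{\mathrm{gr}}$ is defined) is a sensible sanity check that the paper leaves implicit.
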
 

As an application we recover the following known bound on $c(\mathcal{B}(G))$: 

\begin{corollary}\label{cor:c(G)<=D(G)} {\rm 
\cite[Theorem 3.4.10.5]{geroldinger_halter-koch}} 
For any finite abelian group $G$ 
we have the inequality $c(\mathcal{B}(G))\le \mathsf{D}(G)$. 
\end{corollary}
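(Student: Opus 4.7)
The plan is to apply Corollary~\ref{cor:bound:c(G_0)} after first stripping the identity element from the generating set, i.e.\ to the subset $G_0 := G \setminus \{1_G\}$ rather than to $G_0 := G$. The reason this detour is needed is that the singleton sequence $(1_G)$ is a length-$1$ atom of $\mathcal{B}(G)$, which would force the denominator in Corollary~\ref{cor:bound:c(G_0)} to be $1$ and only yield the weaker bound $c(\mathcal{B}(G)) \le 2\mathsf{D}(G)$.

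First I would argue that $c(\mathcal{B}(G)) = c(\mathcal{B}(G \setminus \{1_G\}))$. The atom of $\mathcal{B}(G)$ corresponding to the singleton $(1_G)$ contributes a free $\mathbb{N}_0$-factor: if $a^{\alpha} = a^{\gamma}$ in $\mathcal{B}(G)$ then comparing the multiplicities of $(1_G)$ on both sides forces $\alpha$ and $\gamma$ to agree on this coordinate, so this atom never participates in a nontrivial relation. Consequently any presentation of $\mathcal{B}(G \setminus \{1_G\})$ lifts to one of $\mathcal{B}(G)$ of the same maximal binomial degree, and the catenary degrees coincide.

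Next I would pin down the two numerical inputs of Corollary~\ref{cor:bound:c(G_0)} for $G_0 = G \setminus \{1_G\}$. Every zero-sum free sequence over $G$ automatically avoids $1_G$ (otherwise $(1_G)$ would already be a product-one subsequence), so $\mathsf{d}(G \setminus \{1_G\}) = \mathsf{d}(G)$, which by the identity $\mathsf{d}(G) + 1 = \mathsf{D}(G)$ cited above equals $\mathsf{D}(G) - 1$. For the minimum atom length I would exhibit, for any $g \in G \setminus \{1_G\}$, the product-one sequence $(g, g^{-1})$ (with $g = g^{-1}$ permitted when $g$ has order $2$); this has length $2$ in $\mathcal{B}(G \setminus \{1_G\})$ and is an atom, since neither $(g)$ nor $(g^{-1})$ is a product-one sequence. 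Length-$1$ atoms are impossible because $1_G \notin G_0$, so the minimum is exactly $2$ (assuming $|G| \ge 2$).

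Substituting into Corollary~\ref{cor:bound:c(G_0)} gives
\[
c(\mathcal{B}(G)) \;=\; c(\mathcal{B}(G \setminus \{1_G\})) \;\le\; \frac{2(\mathsf{D}(G) - 1) + 2}{2} \;=\; \mathsf{D}(G),
\]
which is the desired inequality. The degenerate case $|G| = 1$ is handled separately by $c(\mathcal{B}(G)) = 0 \le 1 = \mathsf{D}(G)$. There is no genuine obstacle; the only conceptual point is recognising that the identity must be removed from $G_0$ in order to turn the denominator from $1$ into $2$ and thereby halve the prefactor.
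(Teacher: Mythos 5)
Your proposal is correct and follows essentially the same route as the paper: the paper also reduces to $G_0 = G\setminus\{1_G\}$ via the splitting $\mathcal{B}(G)\cong \mathcal{B}(G\setminus\{1_G\})\times\mathbb{N}_0$ (which is your multiplicity-of-$(1_G)$ argument), observes that the minimal atom length is then $2$, and applies Corollary~\ref{cor:bound:c(G_0)} together with $\mathsf{d}(G)+1=\mathsf{D}(G)$. No gaps.
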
 

\begin{proof} 
The monoid isomorphism 
$\mathcal{B}(G)\cong \mathcal{B}(G\setminus \{1_G\})\times \mathcal{B}(\{1_G\})
\cong  \mathcal{B}(G\setminus \{1_G\})\times \mathbb{N}_0$ 
implies that 
$c(\mathcal{B}(G))=c(\mathcal{B}(G\setminus \{1_G\})$. 
For a nontrivial group $G$ the minimal degree of an atom in 
$\mathcal{B}(G\setminus \{1_G\})$ is $2$, hence Corollary~\ref{cor:bound:c(G_0)}
gives $c(\mathcal{B}(G\setminus \{1_G\})\le \frac{2\mathsf{d}(G)+2}{2}=\mathsf{d}(G)+1
=\mathsf{D}(G)$. 
\end{proof}


\section{Relation to toric varieties}\label{sec:toric}

The quotient construction of toric varieties (cf. \cite{cox}) represents a toric variety as the categorical quotient of a Zariski open subset in a vector space endowed with an action of a diagonalizable group (see \cite{cox_little_schenck} for background on toric varieties). 
Rings of invariants are at the basis of quotient constructions in algebraic geometry. In the proof of Theorem~\ref{thm:c_w(B(g))} we recalled that the ring of invariants $\mathbb{C}[x_1,\dots,x_m]^H$ of a diagonalizable group action is isomorphic to a semigroup ring $\mathbb{C}[\mathcal{B}(\underline{g})]$ of a block monoid. 
Therefore the results in Sections~\ref{sec:repetition} \ref{sec:groebner}, 
\ref{sec:invariant} have relevance for toric varieties. 

In more details, the coordinate rings  of affine toric varieties with no torus factors are 
the semigroup rings (over $\mathbb{C}$) of reduced, affine Krull monoids. 
This class of rings (up to isomorphism) is the same as the class of rings of invariants  
$\mathbb{C}[x_1,\dots,x_m]^H$, where $H$ is an abelian group, and each variable spans an  
$H$-invariant subspace (see for example \cite[Corollary 5.19]{bruns_gubeladze}), 
which is the same as the class of rings of the form $\mathbb{C}[\mathcal{B}(\underline{g})]$.  

Projective toric varieties can be constructed as the projective spectrum of semigroup algebras of reduced affine Krull monoids, see for example \cite[Chapter 10]{miller_sturmfels},  \cite[Theorem 14.2.13]{cox_little_schenck}. Namely, take 
$\underline{g}=(g_1,\dots,g_m)\in G^m$ such that $\mathcal{B}(\underline{g})=\{0\}$, 
and fix an element $h\in G$. 
Endow the monoid $\mathcal{B}((\underline{g},h))$ with the grading given by 
$\mathcal{B}((\underline{g},h))_d=\{\alpha\in \mathcal{B}((\underline{g},h))\subseteq \mathbb{N}_0^{m+1}\mid \alpha_{m+1}=d\}$, $d=0,1,2,\dots$. 
Then $\mathbb{C}[\mathcal{B}((\underline{g},h))]$ becomes a graded algebra, whose projective spectrum is a projective toric variety. 

\begin{example}\label{example:toric_quiver} We reformulate a result on presentations of homogeneous coordinate rings of projective toric quiver varieties from \cite{domokos_joo} in the terminology of the present paper. Let $\Gamma$ be an acyclic quiver (i.e. a finite directed graph having no oriented cycles), with vertex set $\{1,\dots,k\}$ and arrow set 
$\{e_1,\dots,e_m\}$. For an arrow $e_i$ denote by $s(e_i)$ the starting vertex of $e_i$, and denote by $t(e_i)$ the terminating vertex of $e_i$. In the additive group $\mathbb{Z}^k$ 
consider the elements $g_i=(g_{i1},\dots,g_{ik})$, $i=1,\dots,m$ given by 
\[g_{ij}=\begin{cases} -1, &\mbox{ if }j=s(e_{i}) \\
1 &\mbox{ if } j=t(e_i) \\
0&\mbox{ otherwise.}
\end{cases}\]
Pick an element $h\in \mathbb{Z}^k$ whose additive inverse is contained in the subgroup of $\mathbb{Z}^k$ generated by $g_1,\dots,g_m$. Then \cite[Theorem 9.3]{domokos_joo} asserts that the catenary degree $c(\mathcal{B}((\underline{g},h)))$ of $\mathcal{B}((\underline{g},h))$ is at most $3$. 
Moreover, it is shown in \cite{domokos_joo:2} that if we assume in addition that if 
$r(\mathcal{B}((\underline{g},h)))\le 5$ (i.e. the corresponding toric variety has dimension at most $4$), then $c(\mathcal{B}((\underline{g},h)))\le 2$ with essentially one exception. 
We mention that presentations of the coordinate ring of affine toric quiver varieties are 
studied in \cite{joo}.  
\end{example}

Well known open conjectures (of increasing strength) in combinatorial commutative algebra are the following (called sometimes B{\o}gvad's conjecture; see 
\cite[Conjecture 13.19]{sturmfels} or \cite{bruns}): Given  a smooth, projectively normal projective toric variety, its 
\begin{itemize} 
\item[(i)] vanishing ideal is generated by quadratic elements.  
\item[(ii)] homogeneous coordinate ring is Koszul. 
\item[(iii)] vanishing ideal has a quadratic G\"obner basis.  
\end{itemize}

For a finite subset $G_0$ of $G$ with $\mathcal{B}(G_0)=\{0\}$ and an element $h$ whose inverse belongs to the subgroup generated by $G$, the monoid 
$\mathcal{B}(G_0\cup\{h\})$ is endowed with the grading such that the degree $d$ component consists of the elements $\alpha$ in $\mathcal{B}(G_0\cup\{h\})$ with $\alpha(h)=d$. 
Suppose that $\mathcal{B}(G_0\cup\{h\})$ has a quadratic Gr\"obner system. According to the above conjecture this is expected to happen when $\mathcal{B}(G_0\cup\{h\})$ is generated in degree $1$ (so $\mathcal{B}(G_0\cup\{h\})$ is half-factorial in the sense of factorization theory), and the projective spectrum of 
$\mathbb{C}[\mathcal{B}(G_0\cup\{h\})]$ 
 is a smooth projective variety. (For instance, in the setup of 
 Example~\ref{example:toric_quiver} this holds by 
 \cite{haase} for almost all choices of $h$ when $\Gamma$ is a bipartite directed graph with $3$ source and $3$ sink vertices.) 
Then for any $\underline{g}$ with $\mathrm{supp}(\underline{g})=G_0$ we have 
by Corollary~\ref{cor:quadratic_B(g)} that $\mathcal{B}((\underline{g},h))$ has a quadratic 
Gr\"obner basis,  and hence the algebra $\mathbb{C}[\mathcal{B}((\underline{g},h))]$ is Koszul (although its projective spectrum typically fails to be a smooth projective variety). 


\begin{center} Acknowledgements. \end{center} 
I thank K\'alm\'an Cziszter, D\'aniel Jo\'o, and Szabolcs M\'esz\'aros for several discussions 
on the topic of this paper.  I am also grateful to Alfred Geroldinger for helpful comments on the manuscript.


\end{document}